\title{Analysis of a model for the dynamics of microswimmer suspensions} 
\author{Etienne Emmrich}
\address{Technische Universit\"{a}t Berlin, Institut f\"{u}r Mathematik\\
Stra{\ss}e des 17. Juni 136, 10623 Berlin, Germany}
\email{emmricht@math.tu-berlin.de}
\author{Lukas Geuter}
\email{geuter@math.tu-berlin.de}
\date{\today}
\begin{document}

\newcommand{\de}{\,\text{d}}
\renewcommand{\ll}[1]{\langle\hspace{-0.75mm}\langle{#1}\rangle\hspace{-0.75mm}\rangle}
%%%%%%%%%%%%%%%%%%%%%%%%%%%%%%%%%%%%%%%%%%%%%%%%%%%%%%%%%%%%%

\newcommand{\lnnn}{\left|\!\left|\!\left|}
\newcommand{\rnnn}{\right|\!\right|\!\right|}
\newcommand{\lj}{\left[\!\left[}
\newcommand{\rj}{\right]\!\right]}

%%%%%%%%%%%%%%%%%%%%%%%%%%%%%%%%%%%%%%%%%%%%%%%%%%%%%%%%%%%%%
%Fett gedruckte Symbole fÃ¼r Vektorwertige Funktionen und Tensoren
\newcommand*{\uskw}{(\nabla\boldsymbol{u})_{\text{skw}}}
\newcommand*{\usym}{(\nabla\boldsymbol{u})_{\text{sym}}}
\newcommand*{\ueskw}{(\nabla\boldsymbol{u}_\varepsilon)_{\text{skw}}}
\newcommand*{\uesym}{(\nabla\boldsymbol{u}_\varepsilon)_{\text{sym}}}
\newcommand*{\uenskw}{(\nabla\boldsymbol{u}_\varepsilon^n)_{\text{skw}}}
\newcommand*{\uensym}{(\nabla\boldsymbol{u}_\varepsilon^n)_{\text{sym}}}
\newcommand*{\unskw}{(\nabla\boldsymbol{u}_n)_{\text{skw}}}
\newcommand*{\unsym}{(\nabla\boldsymbol{u}_n)_{\text{sym}}}
\newcommand*{\utskw}{(\nabla\boldsymbol{\tilde{u}})_{\text{skw}}}
\newcommand*{\utsym}{(\nabla\boldsymbol{\tilde{u}})_{\text{sym}}}
\newcommand*{\uskwt}{(\nabla\boldsymbol{u}(t))_{\text{skw}}}
\newcommand*{\usymt}{(\nabla\boldsymbol{u}(t))_{\text{sym}}}
\newcommand*{\ueskwt}{(\nabla\boldsymbol{u}_\varepsilon(t))_{\text{skw}}}
\newcommand*{\uesymt}{(\nabla\boldsymbol{u}_\varepsilon(t))_{\text{sym}}}
\newcommand*{\utskwt}{(\nabla\boldsymbol{\tilde{u}}(t))_{\text{skw}}}
\newcommand*{\utsymt}{(\nabla\boldsymbol{\tilde{u}}(t))_{\text{sym}}}
\newcommand*{\phib}{\boldsymbol{\varphi}}
\newcommand*{\psib}{\boldsymbol{\psi}}
\newcommand*{\zb}{\boldsymbol{z}}
\newcommand*{\Pb}{\boldsymbol{p}}
\newcommand*{\pb}{\boldsymbol{p}}
\newcommand*{\ub}{\boldsymbol{u}}
\newcommand*{\Pbe}{\boldsymbol{p}_\varepsilon}
\newcommand*{\pbe}{\boldsymbol{p}_\varepsilon}
\newcommand*{\ube}{\boldsymbol{u}_\varepsilon}
\newcommand*{\Pbez}{\boldsymbol{p}_{0,\varepsilon}}
\newcommand*{\pbez}{\boldsymbol{p}_{0,\varepsilon}}
\newcommand*{\ubez}{\boldsymbol{u}_{0,\varepsilon}}
\newcommand*{\Pbn}{\boldsymbol{p}_n}
\newcommand*{\pbn}{\boldsymbol{p}_n}
\newcommand*{\ubn}{\boldsymbol{u}_n}
\newcommand*{\Pbt}{\boldsymbol{\tilde{p}}}
\newcommand*{\pbt}{\boldsymbol{\tilde{p}}}
\newcommand*{\ubt}{\boldsymbol{\tilde{u}}}
\newcommand*{\Pbz}{\boldsymbol{p}_{0,\varepsilon}}
\newcommand*{\Pbnn}{\boldsymbol{p}_{0,\varepsilon}^n}
\newcommand*{\pbz}{\boldsymbol{p}_{0,\varepsilon}}
\newcommand*{\pbnn}{\boldsymbol{p}_{0,\varepsilon}^n}
\newcommand*{\vb}{\boldsymbol{v}}
\newcommand*{\vbt}{\tilde{\boldsymbol{v}}}
\newcommand*{\vbd}{\boldsymbol{v}_\delta}
\newcommand*{\db}{\boldsymbol{d}}
\newcommand*{\dbt}{\tilde{\boldsymbol{d}}} 
\newcommand*{\dbd}{\boldsymbol{d}_\delta}
\newcommand*{\qb}{\boldsymbol{q}}
\newcommand*{\qbt}{\tilde{\boldsymbol{q}}} 
\newcommand*{\qbtd}{\tilde{\boldsymbol{q}_\delta}}
\newcommand*{\qbd}{\boldsymbol{q}_\delta}
\newcommand*{\intd}{\,\mathrm{d}}
\newcommand*{\blambda}{\boldsymbol{\Lambda}}
\newcommand*{\btheta}{\boldsymbol{\beta}}
\newcommand*{\bdelta}{\boldsymbol{\delta}}
\newcommand*{\by}{\boldsymbol{y}}
\newcommand*{\fb}{\boldsymbol{f}}
\newcommand*{\bw}{\boldsymbol{w}}
\newcommand*{\bv}{\boldsymbol{v}}
\newcommand*{\bM}{\boldsymbol{M}}
\newcommand*{\bW}{\boldsymbol{W}}
\newcommand*{\bL}{\boldsymbol{L}}
\newcommand*{\bH}{\boldsymbol{H}}
\newcommand*{\bHp}{\boldsymbol{H}_\text{per}}
\newcommand*{\bHps}{\boldsymbol{H}_{\text{per},\sigma}}
\newcommand*{\Hps}{\mathbb{H}_{\sigma}}
\newcommand*{\Wps}{\mathbb{W}_{\sigma}}
\newcommand*{\Lps}{\mathbb{L}_{\sigma}}
\newcommand*{\bLs}{\boldsymbol{L}_\sigma}

\newcommand*{\Hp}{\mathbb{H}}
\newcommand*{\Wp}{\mathbb{W}}
\newcommand*{\Lp}{\mathbb{L}}
\newcommand*{\dt}{\,\mathrm{d}t}
\newcommand*{\intT}{\int_{0}^{T}\!}
\newcommand*{\DA}{\bH^2\cap\bH_{0,\sigma}^1}
\newcommand*{\DAO}{\bH^2(\Omega)\cap\bH_{0,\sigma}^1(\Omega)}

\newtheorem{definition}{Definition}
\newtheorem{theorem}{Theorem}
\newtheorem{lemma}{Lemma}
\newtheorem{corollary}{Corollary}
\theoremstyle{remark}
\newtheorem{remark}{Remark}

\maketitle

\begin{abstract}
In this paper, a model that was recently derived in \textsc{Reinken} et al.~\cite{reinken} to describe the dynamics of microswimmer suspensions is studied. In particular, the global existence of weak solutions, their weak-strong uniqueness and a connection to a different model that was proposed in \textsc{Wensink} et al.~\cite{wensink} is shown.
\end{abstract}

\section{Introduction}

Microswimmers are small, self-propelling particles, e.g.~bacteria like \emph{Bacillus subtilis}, algae like \emph{Chlamydomonas reinhardtii}, or artificial nano rods. Suspended in a liquid, the interaction between these particles themselves and between them and the surrounding fluid gives rise to interesting phenomena such as active turbulence. For a high density of microswimmers, such a suspension can be regarded as an example of an active fluid. 

There are several approaches to capture the dynamics of an active fluid starting with adaptions of a model proposed in \textsc{Vicsek} et al.~\cite{vicsek}. Later approaches include the derivation of continuum limit hydrodynamic equations as in \textsc{Toner} \& \textsc{Tu} \cite{tonertu} or adaptions of liquid crystal models as in \textsc{Thampi} \& \textsc{Yeomans} \cite{thampiyeomans}.  Another model we want to mention is the one suggested in \textsc{Wensink} et al.~\cite{wensink}, where a \textsc{Toner--Tu}-like equation is supplemented with a fourth-order \textsc{Swift--Hohenberg} term. 

The model that we want to study here was recently derived in \textsc{Reinken} et al.~\cite{reinken} with the goal of it beeing able to incorporate short-range interactions favoring alignment as well as long-range hydrodynamic interactions. The authors start from overdamped \textsc{Langevin} equations for a generic microscopic model and couple them with a \textsc{Stokes} equation supplemented by an ansatz for the stress tensor.

Our paper is arranged as follows. First, we introduce the equations that are the object of our analysis. Then -- under a simplifying assumption -- we prove the existence of weak solutions via a \textsc{Galerkin} approximation. The main part of the paper is then devoted to prove that such weak solutions obey a relative energy inequality. Such a relative energy inequality can be employed to a variety of uses, such as showing the stability of equilibria (e.g.~in \textsc{Feireisl} \cite{feireisl12}), deriving a posteriori estimates for modelling errors (see \textsc{Fischer} \cite{fischer15}) or as the basis of a generalised concept of solution (e.g.~in \textsc{Lasarzik} \cite{lasarzik19}).

In our case, we will apply the relative energy inequality to prove the weak-strong uniqueness of weak solutions as well as the convergence of those to strong solutions of another problem as one parameter tends to zero.

Throughout this paper, we denote by $c>0$ a generic constant that does not depend on any changing quantity. Furthermore, let $\Omega\subset\mathbb{R}^3$ be a bounded domain of class $\mathscr{C}^2$ and $T>0$ some fixed time. 

Spaces of vector- or matrix-valued functions are denoted by bold letters, e.g.~$\bL^p(\Omega)\coloneqq L^p(\Omega;\mathbb{R}^3)$ for the spaces of integrable functions, and $\bW^{k,p}(\Omega)\coloneqq W^{k,p}(\Omega;\mathbb{R}^3)$ as well as $\bH^{k}(\Omega)\coloneqq W^{k,2}(\Omega;\mathbb{R}^3)$ for \textsc{Sobolev} spaces. The \textsc{Sobolev} space associated with homogeneous \textsc{Dirichlet} boundary conditions is denoted by $\bH^1_0(\Omega):=\operatorname{clos}_{\bH^1}\mathscr{C}_c^\infty(\Omega;\mathbb{R}^3)$, where $\mathscr{C}_c^\infty(\Omega;\mathbb{R}^3)$ denotes the set of infinitely many times differentiable functions that are compactly supported in $\Omega$. We write $\mathscr{C}_{c,\sigma}^\infty(\Omega;\mathbb{R}^3)$ for all such functions that are in addition solenoidal and then denote by $\bL^p_\sigma(\Omega)$ and $\bH^1_{0,\sigma}(\Omega)$ the closure of $\mathscr{C}_{c,\sigma}^\infty(\Omega;\mathbb{R}^3)$ with respect to the standard norms of $\bL^p(\Omega)$ and $\bH^1(\Omega)$, respectively.

The spaces for time-dependent continuous and continuously differentiable functions mapping $[0,T]$ into a \textsc{Banach} space $X$ are denoted by $\mathscr{C}([0,T];X)$ and $\mathscr{C}^1([0,T];X)$, respectively, while the corresponding spaces of \textsc{Bochner}-integrable and weakly differentiable functions are denoted by $L^p(0,T;X)$ and $W^{1,p}(0,T;X)$, respectively. We often omit the time interval $(0,T)$ and the domain $\Omega$ in this notation and write, for example, $L^p(\bW^{1,q})$. Also, for the sake of brevity, we generally do not write out the time dependence of functions under the integral.

For an arbitrary \textsc{Banach} space $X$, we denote its dual by $X^*$ and the dual pairing between $X^*$ and $X$ by $\langle\cdot,\cdot\rangle$. With a slight abuse of notation, however, the dual pairing between the spaces $\bL^p(\Omega)$ and $\bL^q(\Omega)$, where $q$ is the conjugate exponent to $p$, is denoted by $(\cdot,\cdot)$. We use the same notation for the inner product on the \textsc{Hilbert} space $\bL^2(\Omega)\times\bL^2(\Omega)$.

The space $\DAO$ is equipped with the norm $\Vert\Delta\cdot\Vert_{\bL^2}$ (cf. \textsc{Boyer} \& \textsc{Fabrie} \cite[Proposition~IV.5.9]{boyer}).

\section{Model}
We want to study the following equations derived in \textsc{Reinken} et al.~\cite{reinken}. Consider the initial-boundary value problem 
\begin{subequations}\label{eq:model}
\begin{equation}\label{classical}
	\begin{aligned}
	-\Delta\ub + \mu_1\Delta^2\Pb - \gamma_1\Delta\Pb + \lambda_1 (\Pb\cdot\nabla)\Pb+ \nabla \pi_1 &= 0,\\
	\partial_t\Pb + \mu_2\Delta^2\Pb - \gamma_2 \Delta\Pb + \lambda_2 (\Pb\cdot\nabla)\Pb +\alpha \vert \Pb \vert^2\Pb + \beta \Pb\quad& \\+ (\ub\cdot\nabla) \Pb +\kappa \usym\Pb - \uskw\Pb + \nabla\pi_2 &= 0, \\
	\nabla\cdot\ub = \nabla\cdot\Pb &= 0
	\end{aligned}
\end{equation}
on $\Omega\times(0,T)$ and
\begin{equation}\label{eq:initcond}
\begin{array}{rrl}
     \ub= \Pb= \Delta\Pb=0\phantom{\Pb_0}& & \text{ on } \partial\Omega\times(0,T),  \\
     \Pb(\cdot,0)=\Pb_0\phantom{0}& &\text{ in } \Omega,
\end{array}
\end{equation}
\end{subequations}
where $\alpha,\lambda_1,\lambda_2,\mu_1,\mu_2>0$ and $\beta,\gamma_1, \gamma_2,\kappa\in\mathbb{R}$. In contrast to other models, both the influence of the polar ordering of the microswimmers themselves, described by the vector field $\Pb\colon\overline{\Omega}\times[0,T]\rightarrow\mathbb{R}^3$, as well as the one from the velocity field $\ub\colon\overline{\Omega}\times[0,T] \rightarrow\mathbb{R}^3$ of the suspension fluid  is taken into account. The latter is incorporated into the model by coupling the \textsc{Stokes} equation with the equation for the polar ordering parameter $\Pb$, where $\varepsilon$ is a dimensionless parameter that describes the strength of the coupling in the sense that there exist $\varepsilon,\tilde{\lambda_1},\tilde{\mu_1}>0$ and $\tilde{\gamma_1}\in\mathbb{R}$ such that
\begin{equation}\label{eq:coupling}
    \gamma_1=\varepsilon\tilde{\gamma}_1,\;\lambda_1=\varepsilon\tilde{\lambda}_1\;,\mu_1=\varepsilon\tilde{\mu}_1.
\end{equation}
Then for $\varepsilon=0$ both equations decouple (see Section \ref{sec:connect}).

The velocity field of the whole fluid is then given as the sum $\ub+\Pb$ of both vector fields. The third unknown is the pressure $\pi\colon\overline{\Omega}\times[0,T]\rightarrow\mathbb{R}$, where $\pi_1=c_1\pi+c_2 \vert \Pb \vert^2$ and $\pi_2=c_3\pi - c_4\vert \Pb \vert^2$ for certain constants $c_1,c_3,c_4>0$ and $c_2\in\mathbb{R}$. We consider the problem in a variational framework with solenoidal test functions and will not be dealing with the pressure term since it vanishes in the weak formulation.

Unfortunately, we are not able to show the appropriate a apriori estimates to prove the existence of weak solutions for the general case above. Therefore, we restrict ourselves to the simpler case $\kappa=0$.

\section{Existence of weak solutions}\label{sec:existence}

We start by giving the definition of a weak solution to \eqref{eq:model}. Note that the term $\Delta^2\Pb$ occurs in the \textsc{Stokes} equation, meaning that $\Delta\ub$ has at most the regularity of $\Delta^2\Pb$. Hence, the weak formulation of the second equation necessitates a very weak formulation for the \textsc{Stokes} equation. For this, by $A^{-1}\colon\bLs^2(\Omega)\rightarrow\DAO$, we denote the inverse of the \textsc{Stokes} operator, meaning that for $\phib\in \bLs^2(\Omega)$ the vector field $A^{-1}\phib:=\vb$ is the unique solution of
\begin{equation*}
    \begin{array}{rrl}
        -\Delta \bv + \nabla\pi = \phib\phantom{0}& &\text{ in } \Omega,\\
        \nabla\cdot \bv = 0\phantom{\phib}& &\text{ in } \Omega,\\
        \bv=0\phantom{\phib}& &\text{ on } \partial\Omega
    \end{array}
\end{equation*}
in the weak sense. For the existence and properties of this operator, see \textsc{Temam} \cite[Chapter~I, §~2.6]{temam}.
\begin{definition}
[Weak solution]\label{def:weak}
Let $\Pb_0\in\DAO$ be given. A pair $(\ub,\Pb)\in L^2(\bL^2_{\sigma})\times( L^\infty(\bLs^2) \cap L^4(\bLs^4) \cap L^2(\bH^2\cap \bH^1_{0,\sigma}))$ such that $\partial_t\pb\in L^{\frac{4}{3}}((\DA)^*)$ is called weak solution to \eqref{eq:model} if the equations
\begin{equation}\label{eq:weak1}
    \begin{aligned}
	    \intT( \ub , \phib)\dt  - \mu_1 \intT ( \Delta\Pb, \Delta A^{-1}\phib) \dt + \gamma_1 &\intT (\Delta\Pb, A^{-1}\phib ) \dt \\&- \lambda_1 \intT ((\Pb\cdot\nabla)\Pb,A^{-1}\phib) \dt = 0   
\end{aligned}
\end{equation}
and
\begin{equation}\label{eq:weak2}
\begin{aligned}
    \intT\langle \partial_t\Pb,\psib\rangle\dt + \mu_2\intT (&\Delta\Pb,\Delta\psib)\dt + \gamma_2\intT (\nabla\Pb,\nabla\psib) \dt + \lambda_2 \intT ((\Pb\cdot\nabla)\Pb,\psib)\dt \\ &+\alpha  \intT (\vert \Pb \vert^2 \Pb , \psib) \dt+\beta\intT (\Pb,\psib)\dt  +\intT ((\ub\cdot\nabla)\Pb,\psib)\dt\\&+ \frac{1}{2}\intT ((\Pb\cdot\nabla)\psib-(\psib\cdot\nabla)\Pb,\ub)\dt  = 0
\end{aligned}
\end{equation}
hold for all test functions $\phib,\psib\in \mathscr{C}_{c,\sigma}^\infty(\Omega\times(0,T);\mathbb{R}^3)$ and the initial condition $\Pb(0)=\Pb_{0}$ is fulfilled in the weak sense, i.e.,
\begin{equation*}
   \Pb(t)\rightharpoonup\Pb_{0}\text{ in }\bLs^2(\Omega) \text{ as } t\rightarrow 0.
\end{equation*}
\end{definition}

\begin{remark}\label{rem:welldef}
In order to justify that this weak formulation is well-defined, we only consider the nonlinear terms. Since $\Pb$ is divergence-free, the identity 
\begin{equation*}
    (\Pb\cdot\nabla)\Pb=\nabla\cdot(\Pb\otimes\Pb),
\end{equation*}
where $\otimes$ denotes the dyadic product, holds almost everywhere in $\Omega\times(0,T)$. Performing an integration-by-parts and using \textsc{H\"older}'s inequality as well as the continuous embedding $\bH^2(\Omega)\hookrightarrow\bW^{1,6}(\Omega)$ then yields
\begin{equation*}
    \begin{aligned}
        \vert ( (\Pb\cdot\nabla)\Pb,\bw ) \vert\le c \Vert \Pb \Vert_{\bL^{\frac{12}{5}}}^2 \Vert \bw \Vert_{\bH^2}
    \end{aligned}
\end{equation*}
for any $\bw\in\DAO$. Therefore, we can estimate the norm of the functional $\bw\mapsto ( (\Pb\cdot\nabla)\Pb,\bw )$ by
\begin{equation*}
    \begin{aligned}
      \Vert   (\Pb\cdot\nabla)\Pb  \Vert_{(\DA)^*}\le c \Vert \Pb \Vert_{\bL^{\frac{12}{5}}}^2 
    \end{aligned}
\end{equation*}
almost everywhere in $(0,T)$. Hence,
\begin{equation*}
    \begin{aligned}
      \intT\left\vert((\Pb\cdot\nabla)\Pb,A^{-1}\phib)\right\vert\intd t &\le \intT \Vert(\Pb\cdot\nabla)\Pb\Vert_{(\DA)^*} \Vert A^{-1}\phib \Vert_{\DA} \intd t \\
      &\le  c \Vert \Pb \Vert_{L^4(\bL^{\frac{12}{5}})}^2 \Vert \phib \Vert_{L^2(\bL^2)},
    \end{aligned}
\end{equation*}
where we also used the boundedness of $A^{-1}:\bLs^2(\Omega)\rightarrow\DAO$ (see \textsc{Temam} \cite[Section~2.5]{temamnonlin}). For the remaining nonlinear terms, applications of \textsc{H\"older}'s inequality yield
\begin{equation}\label{eq:welldef}
    \begin{aligned}
        \intT  \left\vert((\Pb\cdot\nabla)\Pb,\psib)\right\vert\dt &\le \Vert \Pb \Vert_{L^2(\bL^\infty)}\Vert \nabla\Pb \Vert_{L^2(\bL^6)} \Vert \psib \Vert_{L^\infty(\bL^\frac{6}{5})},\\
        \intT \left\vert(\vert \Pb \vert^2 \Pb , \psib)\right\vert \dt &\le \Vert \Pb \Vert_{L^4(\bL^4)}^3 \Vert \psib \Vert_{L^4(\bL^4)},\\
        \intT \left\vert((\ub\cdot\nabla)\Pb,\psib)\right\vert \dt &\le \Vert \ub \Vert_{L^2(\bL^2)}\Vert \nabla\Pb \Vert_{L^2(\bL^6)} \Vert \psib \Vert_{L^\infty(\bL^3)},\\
        \intT \left\vert((\Pb\cdot\nabla)\psib-(\psib\cdot\nabla)\Pb,\ub)\right\vert\dt &\le \Vert \Pb \Vert_{L^2(\bL^\infty)}\Vert \nabla\psib \Vert_{L^\infty(\bL^2)} \Vert \ub \Vert_{L^2(\bL^2)}\\&\qquad\qquad+\Vert \psib \Vert_{L^\infty(\bL^3)}\Vert \nabla\Pb \Vert_{L^2(\bL^6)} \Vert \ub \Vert_{L^2(\bL^2)}. 
    \end{aligned}
\end{equation}
Together with the continuous embedding $\bH^2(\Omega)\hookrightarrow\bL^\infty(\Omega)$, we see that this notion of a weak solution is well-defined for all test functions $\psib\in L^4(\DA)\cap L^\infty(\bH^1_{0,\sigma})$ and $\phib\in L^2(\bL^2_\sigma)$.
\end{remark}
We can prove the existence of such a solution via a \textsc{Galerkin} approximation.
\begin{theorem}\label{thm:existence}
Let $\Pb_{0}\in \DAO$. Then there exists a weak solution to \eqref{eq:model} in the sense of Definition~\ref{def:weak}.  
\end{theorem}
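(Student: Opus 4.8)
The plan is to construct approximate solutions by a \textsc{Galerkin} scheme adapted to the eigenfunctions of the \textsc{Stokes} operator, to derive $n$-uniform a priori bounds matching exactly the regularity prescribed in Definition~\ref{def:weak}, and to pass to the limit by a compactness argument.

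\textbf{Galerkin setup.} Let $\{\bw_k\}_{k\in\mathbb{N}}\subset\DAO$ be the $\bLs^2(\Omega)$-orthonormal eigenfunctions of the \textsc{Stokes} operator $A$, with $A\bw_k=\lambda_k\bw_k$, and put $V_n:=\operatorname{span}\{\bw_1,\dots,\bw_n\}$. The decisive structural observation is that the first equation \eqref{eq:weak1} carries no time derivative and therefore merely slaves $\ubn$ to $\Pbn$: in the finite-dimensional, pointwise-in-time version of \eqref{eq:weak1}, testing against $\bw_j$ and using $A^{-1}\bw_j=\lambda_j^{-1}\bw_j$ expresses the coefficients of $\ubn\in V_n$ as polynomials of degree at most two in the coefficients $d_1^n,\dots,d_n^n$ of $\Pbn$. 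Substituting this relation into the pointwise version of \eqref{eq:weak2} tested against $\bw_j$, and noting $(\partial_t\Pbn,\bw_j)=\dot d_j^n$, yields a closed system of ordinary differential equations with polynomial, hence locally \textsc{Lipschitz}, right-hand side. The \textsc{Picard}--\textsc{Lindel\"of} theorem provides a unique local solution $\Pbn\in\mathscr{C}^1([0,t_n);V_n)$ with $\Pbn(0)$ the $\bLs^2$-orthogonal projection of $\Pb_0$ onto $V_n$.

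\textbf{A priori estimates.} The central estimate comes from testing the $\Pbn$-equation with $\psib=\Pbn$, which is admissible since $\Pbn\in V_n$. Both convective terms and the skew-symmetric coupling term vanish: $((\ubn\cdot\nabla)\Pbn,\Pbn)=((\Pbn\cdot\nabla)\Pbn,\Pbn)=0$ because $\ubn,\Pbn$ are solenoidal, and the last term of \eqref{eq:weak2} vanishes identically for $\psib=\Pbn$ by its antisymmetric structure. What remains is
\begin{equation*}
\tfrac12\tfrac{\de}{\de t}\Vert\Pbn\Vert_{\bL^2}^2+\mu_2\Vert\Delta\Pbn\Vert_{\bL^2}^2+\gamma_2\Vert\nabla\Pbn\Vert_{\bL^2}^2+\alpha\Vert\Pbn\Vert_{\bL^4}^4+\beta\Vert\Pbn\Vert_{\bL^2}^2=0.
\end{equation*}
As $\mu_2,\alpha>0$, the possibly negative contributions are absorbed: $\gamma_2\Vert\nabla\Pbn\Vert_{\bL^2}^2$ into $\mu_2\Vert\Delta\Pbn\Vert_{\bL^2}^2$ via the interpolation $\Vert\nabla\,\cdot\,\Vert_{\bL^2}^2\le\eta\Vert\Delta\,\cdot\,\Vert_{\bL^2}^2+c_\eta\Vert\,\cdot\,\Vert_{\bL^2}^2$, and $\beta\Vert\Pbn\Vert_{\bL^2}^2$ partly into $\alpha\Vert\Pbn\Vert_{\bL^4}^4$ by \textsc{Young}'s inequality. \textsc{Gronwall}'s lemma then delivers bounds, uniform in $n$, in $L^\infty(\bLs^2)\cap L^2(\DA)\cap L^4(\bLs^4)$; in particular finite-time blow-up is excluded, so $t_n=T$. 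Feeding these into the slaving relation and using the boundedness of $A^{-1}\colon\bLs^2(\Omega)\to\DAO$ together with the estimates of Remark~\ref{rem:welldef} bounds $\ubn$ uniformly in $L^2(\bL^2_\sigma)$. Finally, interpolation between $L^\infty(\bLs^2)$ and $L^2(\DA)$ yields the additional uniform bound $\Pbn\in L^4(\bH^1)$, which is indispensable below.

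\textbf{Time derivative, compactness, and the main difficulty.} Reading each term of \eqref{eq:weak2} as a functional on $\DA$ and using $\bH^2(\Omega)\hookrightarrow\bL^\infty(\Omega)\cap\bW^{1,4}(\Omega)$, the cubic term contributes $\Vert\Pbn\Vert_{\bL^4}^3\in L^{4/3}(0,T)$, which fixes the integrability exponent, while the biharmonic and lower-order linear terms lie in $L^2(0,T)\subset L^{4/3}(0,T)$. The delicate terms are the swimmer--fluid couplings: bounding $((\ubn\cdot\nabla)\Pbn,\psib)$ and the skew term by $\Vert\nabla\Pbn\Vert_{\bL^2}\Vert\ubn\Vert_{\bL^2}\Vert\psib\Vert_{\bL^\infty}$ and $\Vert\Pbn\Vert_{\bL^4}\Vert\ubn\Vert_{\bL^2}\Vert\psib\Vert_{\bW^{1,4}}$, the products $\Vert\nabla\Pbn\Vert_{\bL^2}\Vert\ubn\Vert_{\bL^2}$ and $\Vert\Pbn\Vert_{\bL^4}\Vert\ubn\Vert_{\bL^2}$ lie in $L^{4/3}(0,T)$ precisely because $\nabla\Pbn\in L^4(\bL^2)$ and $\Pbn\in L^4(\bL^4)$ pair with $\ubn\in L^2(\bL^2)$ via \textsc{H\"older} in time ($\tfrac14+\tfrac12=\tfrac34$). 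I expect this bookkeeping to be the main obstacle: since the \textsc{Stokes} equation yields $\ubn$ only in $L^2(\bL^2_\sigma)$ with no gain in time-regularity, the coupling terms would na\"ively sit merely in $L^1((\DA)^*)$, and it is exactly the interpolated bound $\Pbn\in L^4(\bH^1)$ that upgrades them to the required $L^{4/3}$. Consequently $\partial_t\Pbn$ is bounded in $L^{4/3}((\DA)^*)$, and since $\DA\hookrightarrow\hookrightarrow\bH^1_{0,\sigma}\hookrightarrow(\DA)^*$, the \textsc{Aubin}--\textsc{Lions}--\textsc{Simon} lemma gives a subsequence with $\Pbn\to\Pb$ strongly in $L^2(\bH^1_{0,\sigma})$, hence almost everywhere on $\Omega\times(0,T)$.

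\textbf{Passage to the limit.} Along a further subsequence, $\Pbn$ converges weakly-$*$ in $L^\infty(\bLs^2)$ and weakly in $L^2(\DA)\cap L^4(\bLs^4)$, $\ubn\rightharpoonup\ub$ in $L^2(\bL^2_\sigma)$, and $\partial_t\Pbn\rightharpoonup\partial_t\Pb$ in $L^{4/3}((\DA)^*)$, so all linear terms pass to the limit directly. For the nonlinearities I would rewrite the convective terms in divergence form, e.g.~$((\ubn\cdot\nabla)\Pbn,\psib)=-(\Pbn,(\ubn\cdot\nabla)\psib)$ and $((\Pbn\cdot\nabla)\Pbn,A^{-1}\phib)=-(\Pbn\otimes\Pbn,\nabla A^{-1}\phib)$, then pass to the limit by combining the strong $L^2(\bH^1_{0,\sigma})$ (respectively almost-everywhere) convergence of $\Pbn$ with the weak convergence of $\ubn$; the cubic term $\vert\Pbn\vert^2\Pbn$ converges weakly in $L^{4/3}(\bL^{4/3})$ to $\vert\Pb\vert^2\Pb$ by almost-everywhere convergence together with the uniform $L^4(\bLs^4)$ bound. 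This shows that $(\ub,\Pb)$ satisfies \eqref{eq:weak1}--\eqref{eq:weak2}, first for test functions in $V_m$ and then, by density, for all admissible $\phib,\psib$. The initial condition is recovered from $\Pb\in\mathscr{C}_w([0,T];\bLs^2)$ (a consequence of $\Pb\in L^\infty(\bLs^2)$ and $\partial_t\Pb\in L^{4/3}((\DA)^*)$) together with $\Pbn(0)\to\Pb_0$ in $\bLs^2$, which completes the construction.
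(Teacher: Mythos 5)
Your proposal is correct and follows essentially the same route as the paper: a \textsc{Galerkin} scheme in the \textsc{Stokes} eigenfunctions with $\ube$ slaved to $\Pbn$ through the very weak \textsc{Stokes} equation, the energy estimate from testing with $\Pbn$ (absorbing the indefinite $\gamma_2$- and $\beta$-terms), the $L^{4/3}((\DA)^*)$ bound on $\partial_t\Pbn$, \textsc{Aubin}--\textsc{Lions} compactness, and attainment of the initial value via weak continuity in $\bLs^2$. The only deviations are cosmetic: the paper invokes \textsc{Peano} where you use the polynomial structure to get \textsc{Picard}--\textsc{Lindel\"of}, and it obtains the coupling-term integrability through pointwise interpolation inequalities rather than your (equivalent) uniform $L^4(\bH^1)$ bound; the paper also spells out the identification $\Pb(0)=\Pb_0$ with an explicit test-function argument, a step you state but compress.
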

\begin{proof}
First, we choose a \textsc{Galerkin} basis $\{\bw_m\}_{m\in\mathbb{N}}\subset \bH^2(\Omega)\cap\bH_{0,\sigma}^1(\Omega)$ consisting of eigenfunctions of the \textsc{Stokes} operator $A: \bH^2(\Omega)\cap\bH_{0,\sigma}^1(\Omega)\subset \bL_\sigma^2(\Omega) \rightarrow \bL_\sigma^2(\Omega)$. For any $n\in\mathbb{N}$, let $W_n$ be the finite dimensional subspace of $\bH^2(\Omega)\cap\bH_{0,\sigma}^1(\Omega)$ spanned by $\bw_1,\dots,\bw_n$ and $\Pi_n\colon\bL^2_\sigma(\Omega)\rightarrow W_n$ the $\bL^2(\Omega)$-orthogonal projection onto $W_n$. For the initial value $\Pb_{0,n}\coloneqq\Pi_n \Pb_0$, we are then looking for a solution $(\ubn,\Pbn)\in\mathscr{C}^1([0,T];W_n)\times\mathscr{C}^1([0,T];W_n)$ to the discretised problem
\begin{subequations}\label{eq:disc}
    \begin{equation}\label{eq:disc1}
        \begin{aligned}
             ( \ubn , \bv) -\mu_1(\Delta\Pbn,\Delta A^{-1}  \bv) + \gamma_1 (\Delta\Pbn, A^{-1}\bv ) - \lambda_1 (\Pbn\cdot\nabla)\Pbn,A^{-1}\bv) =0  ,
        \end{aligned}
    \end{equation}
    \begin{equation}\label{eq:disc2}
        \begin{aligned}
              (\partial_t\Pbn,\bw)+\mu_2  (\Delta\Pbn,\Delta\bw) + \gamma_2  (\nabla\Pbn,\nabla\bw)+\lambda_2  ((\Pbn\cdot\nabla)\Pbn,\bw) +\alpha   (\vert \Pbn \vert^2 \Pbn &, \bw)\\+ \beta (\Pbn,\bw) +  ((\ubn\cdot\nabla)\Pbn,\bw)  + \frac{1}{2} ((\Pbn\cdot\nabla)\bw-(\bw\cdot\nabla)\Pbn,\ubn)    &= 0,\\
             \Pbn(0)&=\Pb_{0,n}
          \end{aligned}
    \end{equation}
\end{subequations}
in $(0,T)$ and for all $\bv,\bw\in W_n$. In order to show the existence of a solution to this system, we fix $n\in\mathbb{N}$ and represent each function $\Pbn:[0,T]\rightarrow W_n$ via the corresponding basis of $W_n$, i.e.,
\begin{equation*}
    \Pbn(t)=\sum_{i=1}^n P_{ni}(t)\bw_i.
\end{equation*}
We make use of the inverse $J^{-1}\colon(\bL_\sigma^2(\Omega))^*\rightarrow \bL_\sigma^2(\Omega)$ of the \textsc{Riesz} isomorphism on the $\textsc{Hilbert}$ space $\bL_\sigma^2(\Omega)$ in order to define
\begin{equation}\label{eq:ubyp}
    \ubn = \Pi_n J^{-1} \boldsymbol{g}_{\Pbn},
\end{equation}   
where $\boldsymbol{g}_{\Pbn}\in (\bL_\sigma^2(\Omega))^*$ 
is given as   
\begin{equation*}   
    \langle \boldsymbol{g}_{\Pbn}, \vb \rangle = \mu_1 (\Delta\Pbn,\Delta A^{-1}\bv) - \gamma_1   (\Delta\Pbn,A^{-1}\bv)  +\lambda_1((\Pbn\cdot\nabla)\Pbn, A^{-1}\bv)    
\end{equation*}
for any $\Pb_n\in W_n$ and $\bv\in \bLs^2(\Omega)$. In order to transform the problem to an autonomous system of ordinary differential equations, we let $\by:[0,T]\rightarrow\mathbb{R}^n$ and $\by_0\in\mathbb{R}^n$ be defined as
\begin{equation*}
    (\by(t))_i=P_{ni}(t),\quad (\by_0)_i= (\Pb_{0,n},\bw_i)
\end{equation*}
for $i=1,\dots,n$. Then the discretised problem \eqref{eq:disc} can equivalently be written as 
\begin{align*}
        \by' &= \fb(\by)\quad\text{ on } [0,T],\\
        \by(0)  &= \by_0,
\end{align*}
where the right-hand side $\fb:\mathbb{R}^n\rightarrow\mathbb{R}^n$ is given as
\begin{equation*}
    \begin{aligned}
        \fb(\by)=& -\mu_2  (\Delta\Pbn,\Delta\bw) - \gamma_2  (\nabla\Pbn,\nabla\bw)-\lambda_2  ((\Pbn\cdot\nabla)\Pbn,\bw) -\alpha   (\vert \Pbn \vert^2 \Pbn , \bw)\\&-\beta (\Pbn,\bw) -  ((\ubn\cdot\nabla)\Pbn,\bw)  - \frac{1}{2} ((\Pbn\cdot\nabla)\bw-(\bw\cdot\nabla)\Pbn,\ubn)   
    \end{aligned}
\end{equation*}
for $i=1,\dots,n$, as well as
\begin{equation*}
    \Pbn=\sum_{i=1}^n \by_i\bw_i,
\end{equation*}
and $\ubn$ as in \eqref{eq:ubyp}.
Note that the mass matrix in this case is just the identity since the eigenfunctions $\bw_i$ are orthonormal with respect to the $\bL^2$-inner product. Then $\fb$ does not depend on $t$ and is continuous with respect to $\by$. Therefore, the \textsc{Peano} theorem (see \textsc{Hale} \cite[Chapter~I, Theorem~5.1]{hale}) provides a maximally continued solution $(\ubn,\Pbn)\in\mathscr{C}^1([0,T_n);W_n)\times\mathscr{C}^1([0,T_n);W_n)$ to \eqref{eq:disc}, where $T_n\le T$ may depend on the dimension $n$ of $W_n$. We now derive a priori estimates to show that there is no blow-up of these solution, hence they exist globally in time. 

To this end, we first test equation \eqref{eq:disc2} with $\Pbn$. Since $\ubn$ and $\Pbn$ are both divergence-free, this results in
\begin{equation*}
    \begin{aligned}
             & \frac{1}{2}\frac{\intd}{\intd t} \Vert \Pbn \Vert_{\bL^2}^2+\mu_2  \Vert \Delta\Pbn \Vert_{\bL^2}^2 + \gamma_2  \Vert \nabla\Pbn \Vert_{\bL^2}^2   + \alpha   \Vert \Pbn \Vert_{\bL^4}^4 + \beta \Vert \Pbn \Vert_{\bL^2}^2    = 0 \text{ in } (0,T).
          \end{aligned}
\end{equation*}
Here we also used the identity \[(\partial_t\Pbn,\Pbn)=\frac{1}{2}\frac{\intd}{\intd t} \Vert \Pbn \Vert_{\bL^2}^2 \text{ in } (0,T)\] for continuously differentiable functions. If $\gamma_2,\beta\ge 0$ this already gives the desired a priori estimate for $\pb$. We set
\begin{equation*}
    a^-\coloneqq\begin{cases} 0,& \text{if } a\ge 0\\
    -a,& \text{if } a>0. 
    \end{cases}
\end{equation*}
for any $a\in\mathbb{R}$. Performing an integration-by-parts and using \textsc{Cauchy--Schwarz}'s as well as \textsc{Young}'s inequality then leads to
\begin{equation*}
    \gamma_2^- \Vert \nabla\Pbn \Vert_{\bL^2}^2 \le \frac{( \gamma_2^-)^2}{2\mu_2}\Vert \Pbn \Vert_{\bL^2}^2  +\frac{\mu_2}{2} \Vert \Delta\Pbn \Vert_{\bL^2}^2,
\end{equation*}
and by another application of \textsc{Young}'s inequality as well as the continuous embedding $\bL^4(\Omega)\hookrightarrow\bL^2(\Omega)$, it follows that
\begin{equation*}
    \left(\frac{ (\gamma_2^-)^2}{2\mu_2}+\beta^-\right)\Vert \Pbn \Vert_{\bL^2}^2 \le \frac{\vert \Omega\vert}{2\alpha}\left(\frac{ (\gamma_2^-)^2}{2\mu_2}+\beta^-\right)^2 + \frac{\alpha}{2}\Vert \Pbn \Vert_{\bL^4}^4.
\end{equation*}
By absorbing the norms of $\Pbn$ into the left-hand side, we arrive at
\begin{equation*}
    \begin{aligned}
             & \frac{\intd}{\intd t} \Vert \Pbn \Vert_{\bL^2}^2 +\mu_2  \Vert \Delta\Pbn \Vert_{\bL^2}^2 + \alpha   \Vert \Pbn \Vert_{\bL^4}^4    \le \frac{\vert \Omega\vert}{\alpha}\left(\frac{ (\gamma_2^-)^2}{2\mu_2}+\beta^-\right)^2.
          \end{aligned}
\end{equation*} Integrating the above estimate leads to
\begin{equation}\label{eq:ap1.fin}
    \Vert \Pb_n(t) \Vert_{\bL^2}^2 + \int_0^t\!\left( \mu_2 \Vert \Delta \Pbn \Vert_{\bL^2}^2 + \alpha \Vert \Pbn \Vert^4_{\bL^4}  \right) \intd s \le  \Vert \Pb_{n,0} \Vert_{\bL^2}^2 + \frac{T\vert \Omega\vert}{\alpha}\left(\frac{ (\gamma_2^-)^2}{2\mu_2}+\beta^-\right)^2
\end{equation}
for all $t\in[0,T_n)$. In order to derive an estimate for the velocity field, we now test \eqref{eq:disc1} with $\ubn$, which results in
\begin{equation}\label{eq:ap2.0}
    \begin{aligned}
      \Vert &\ubn \Vert_{\bL^2(\Omega)}^2  = \mu_1  (\Delta\Pbn, \Delta A^{-1}\ubn) -\gamma_1  (\Delta\Pbn, A^{-1}\ubn ) + \lambda_1 ((\Pbn\cdot\nabla)\Pbn,A^{-1}\ubn).
    \end{aligned}
\end{equation}
Just as in Remark~\ref{rem:welldef}, we can derive the estimate
\begin{equation*}
    \begin{aligned}
      \Vert  (\Pb_n\cdot\nabla)\Pb_n  \Vert_{(\DA)^*}\le c \Vert \Pb_n \Vert_{\bL^{\frac{12}{5}}}^2 
    \end{aligned}
\end{equation*} 
and therefore, together with the embedding $\bL^4(\Omega)\hookrightarrow\bL^\frac{12}{5}(\Omega)$,
\begin{equation}\label{eq:ap2.1}
    \begin{aligned}
      ((\Pbn\cdot\nabla)\Pbn,A^{-1}\ub_n)  &\le c \Vert \Pb_n \Vert_{\bL^4}^2 \Vert \ub_n \Vert_{\bL^2}
    \end{aligned}
\end{equation}
almost everywhere in $(0,T_n)$.

By the continuity of $A^{-1}\colon \bLs^2(\Omega)\rightarrow\DAO$, it follows that
\begin{equation}\label{eq:ap2.2}
    \begin{aligned}
      (\Delta\Pb_n,A^{-1}\ub_n)\le c \Vert \Pb_n \Vert_{\bL^2} \Vert \ub_n \Vert_{\bL^2}\le c \Vert \Delta\pbn\Vert_{\bL^2}\Vert \ubn\Vert_{\bL^2}
    \end{aligned}
\end{equation}
and, similarly, we find
\begin{equation}\label{eq:ap2.3}
    \begin{aligned}
        (\Delta\Pbn, \Delta A^{-1}\ubn)\le c \Vert \Delta \Pb_n \Vert_{\bL^2} \Vert \ub_n \Vert_{\bL^2}.
    \end{aligned}
\end{equation}
Applying \textsc{Young}'s inequality in \eqref{eq:ap2.1}, \eqref{eq:ap2.2} and \eqref{eq:ap2.3} and absorbing the terms depending on $\ub_n$ into the left-hand side of \eqref{eq:ap2.0} yields
\begin{equation}\label{eq:extu}
    \Vert \ub_n \Vert_{\bL^2}^2 \le \varepsilon^2c\left( \Vert \Pb_n \Vert_{\bL^4}^4 + \Vert \Delta\Pb_n \Vert_{\bL^2}^2 \right)
\end{equation}
in $(0,T_n)$. Here $\varepsilon>0$ is given by \eqref{eq:coupling}. By integrating this inequality and using estimate \eqref{eq:ap1.fin} as well as the $\bL^2$-stability of the projections $\Pi_n$, we arrive at
\begin{equation}\label{eq:apriori}
    \Vert \Pb_n(t) \Vert_{\bL^2}^2 + \int_0^t\!\left( \Vert \ub_n(s) \Vert_{\bL^2}^2 + \mu_2 \Vert \Delta \Pbn(s) \Vert_{\bL^2}^2 + \alpha \Vert \Pbn(s) \Vert^4_{\bL^4} \right) \intd s \le c (\Vert \Pb_{0} \Vert_{\bL^2}^2+1).
\end{equation}
Since this estimate is uniform in $n$, the discrete solution $(\ubn,\Pbn)$ actually exists on $[0,T]$ (see \textsc{Hale} \cite[Chapter~I, Theorem~5.2]{hale}).

We proceed with deriving an estimate for the time derivative of $\Pbn$. By \eqref{eq:disc2} and \textsc{H\"older}'s inequality, we find
\begin{equation*}
    \begin{aligned}
        \vert \langle \partial_t\Pbn,\bv \rangle \vert =  \vert ( \partial_t\Pbn,\Pi_n\bv ) \vert \le&\mu_2 \Vert \Delta \Pbn \Vert_{\bL^2}\Vert \Delta \Pi_n\bv \Vert_{\bL^2} + \vert \gamma_2 \vert \Vert \nabla\Pbn \Vert_{\bL^\frac{6}{5}}\Vert \nabla\Pi_n\bv \Vert_{\bL^6}\\
        &+ \lambda_2 \Vert \Pbn \Vert_{\bL^2}  \Vert \nabla\Pbn \Vert_{\bL^2} \Vert \Pi_n\bv \Vert_{\bL^\infty} + \alpha \Vert \Pbn \Vert_{\bL^3}^3\Vert \Pi_n\bv \Vert_{\bL^\infty}\\ 
        &+ \vert \beta \vert \Vert \Pbn \Vert_{\bL^1} \Vert \Pi_n\bv \Vert_{\bL^\infty} +\frac{3}{2}\Vert \ubn \Vert_{\bL^2}  \Vert \Pbn \Vert_{\bL^3} \Vert \nabla\Pi_n\bv \Vert_{\bL^6}\\ &+ \frac{1}{2}\Vert \Pi_n\bv \Vert_{\bL^\infty}  \Vert \nabla \Pbn \Vert_{\bL^2} \Vert \ubn \Vert_{\bL^2} 
    \end{aligned}
\end{equation*}
for all $\bv\in\DAO$. From the contiuous embeddings $\bH^2(\Omega)\hookrightarrow\bW^{1,6}(\Omega)\hookrightarrow\bL^\infty(\Omega)$ and the $\bH^2$-stability of the orthogonal projections $\Pi_n$ (cf. \textsc{M\'alek}, \textsc{Ne\v cas}, \textsc{Rokyta} \& \textsc{R\r u\v zi\v cka} \cite[Appendix, Theorem~4.11 and Lemma~4.26]{malek} together with \textsc{Boyer} \& \textsc{Fabrie} \cite[Proposition~III.3.17]{boyer}), it follows that
\begin{equation*}
    \begin{aligned}
    \Vert \partial_t\Pbn \Vert_{(\DA)^*} \le c\big(\Vert \Delta \Pbn \Vert_{\bL^2}+ \Vert \Pbn \Vert_{\bL^2}  \Vert \nabla\Pbn \Vert_{\bL^2} + \Vert \Pbn \Vert_{\bL^4}^3 + \Vert &\ubn \Vert_{\bL^2}  \Vert \Pbn \Vert_{\bL^3} \\&+  \Vert \nabla \Pbn \Vert_{\bL^2} \Vert \ubn \Vert_{\bL^2}   \big).
    \end{aligned}
\end{equation*}
Now, using the embedding $\bH^1_0(\Omega)\hookrightarrow\bL^6(\Omega)$, we find
\begin{equation*}
    \Vert \Pbn \Vert_{\bL^3} \le \Vert \Pbn \Vert_{\bL^2}^\frac{1}{2} \Vert \Pbn \Vert_{\bL^6}^\frac{1}{2} \le c \Vert \Pbn \Vert_{\bL^2}^\frac{1}{2} \Vert \nabla\Pbn \Vert_{\bL^2}^\frac{1}{2}. 
\end{equation*}
Together with
\begin{equation*}
    \Vert \nabla\Pbn \Vert_{\bL^2}^2 = (\nabla\pbn, \nabla\pbn) = -( \pbn,\Delta\pbn) \le \Vert \Pbn \Vert_{\bL^2} \Vert \Delta \Pbn \Vert_{\bL^2}
\end{equation*}
as well as \textsc{Young}'s inequality, we arrive at
\begin{equation*}
    \begin{aligned}
    \Vert \partial_t\Pbn \Vert_{(\DA)^*} \le c\big( \Vert \Delta \Pbn& \Vert_{\bL^2} +  \Vert \Pbn \Vert_{\bL^2}  \Vert \nabla\Pbn \Vert_{\bL^2} +  \Vert \Pbn \Vert_{\bL^4}^3 \\ &\Vert \Pbn \Vert_{\bL^2}^\frac{3}{4}(\Vert \ubn \Vert_{\bL^2}^\frac{5}{4} +   \Vert \Delta \Pbn \Vert_{\bL^2}^\frac{5}{4}) +  \Vert \Pbn \Vert_{\bL^2}^\frac{1}{2}(\Vert \ubn \Vert_{\bL^2}^\frac{3}{2} +   \Vert \Delta \Pbn \Vert_{\bL^2}^\frac{3}{2})  \big).
    \end{aligned}
\end{equation*}
Taking the estimate above to the power $\frac{4}{3}$, integrating and applying the a priori estimate \eqref{eq:apriori} then yields
\begin{equation*}
    \int_0^t\! \Vert \partial_t\Pbn \Vert_{(\DA)^*}^\frac{4}{3} \intd s \le c (\Vert \Pb_0 \Vert_{\bL^2}^2+1).
\end{equation*}
Together with \eqref{eq:apriori}, we can extract (not relabled) subsequences $\{\Pbn\}$ and $\{\ubn\}$ such that
\begin{subequations}
\begin{align}
    \phantom{\partial_t}\ub_n &\rightharpoonup \ub\phantom{\partial_t}\quad \text{ in } L^2(0,T;\bL^2_{\sigma})\\
	\phantom{\partial_t}\Pb_n &\overset{*}{\rightharpoonup} \Pb\phantom{\partial_t} \quad\text{ in } L^\infty(0,T;\bLs^2)\label{eq:convP1}\\
	\phantom{\partial_t}\Pb_n &\rightharpoonup \Pb\phantom{\partial_t} \quad\text{ in } L^4(0,T;\bLs^4)\\
	\phantom{\partial_t}\Pb_n &\rightharpoonup \Pb\phantom{\partial_t} \quad\text{ in } L^2(0,T;\bH^2\cap \bH^1_{0,\sigma})\\
	\partial_t\Pb_n &\rightharpoonup \partial_t\Pb \quad\text{ in } L^{\frac{4}{3}}(0,T;(\bH^2\cap\bH^1_{0,\sigma})^*)\label{eq:convP4}
\end{align}
\end{subequations}
as $n\rightarrow\infty$. Let us show that $(\ub,\pb)$ is a weak solution to \eqref{eq:model} in the sense of Definition \eqref{def:weak}. Since $W_m\subset W_n$ for $m\le n$, \eqref{eq:disc} also holds true for all $\phi(t)\bv,\psi(t)\bw$, where $t\in(0,T)$, $\phi,\psi\in \mathscr{C}_c^\infty(0,T)$ and $\bv,\bw\in W_m$. We fix $m$ and let $n$ go to $\infty$. In view of the compact embedding $\DAO\overset{c}{\hookrightarrow}\bH^1_{0,\sigma}(\Omega)$, the \textsc{Lions--Aubin} lemma (see \textsc{Lions} \cite[Th\'{e}or\`{e}me~1.5.2]{lionsquel}) provides the strong convergence 
\begin{align}\label{eq:convP5}
	\Pb_n &\rightarrow \Pb \quad\text{ in } L^2(0,T;\bH^1_{0,\sigma}),
\end{align}
passing to a subsequence if necessary. This in turn allows us, after integrating, to pass to the limit also in the nonlinear terms of \eqref{eq:disc}. Hence, we come up with
\begin{equation*}
    \begin{aligned}
	\intT( \ub , \phi\bv)\dt  - \mu_1 \intT ( \Delta\Pb, \Delta A^{-1}(\phi\bv)) \dt + \gamma_1 &\intT (\Delta\Pb, A^{-1}(\phi\bv) ) \dt \\&- \lambda_1 \intT ((\Pb\cdot\nabla)\Pb,A^{-1}(\phi\bv)) \dt = 0   
\end{aligned}
\end{equation*}
and
\begin{equation*}
\begin{aligned}
    \intT\langle \partial_t\Pb,\psi\bw\rangle\dt + \mu_2\intT (&\Delta\Pb,\Delta(\psi\bw))\dt + \gamma_2\intT (\nabla\Pb,\nabla(\psi\bw)) \dt + \lambda_2 \intT ((\Pb\cdot\nabla)\Pb,\psi\bw)\dt \\ &+\alpha  \intT (\vert \Pb \vert^2 \Pb , \psi\bw) \dt+\beta\intT (\Pb,\psi\bw)\dt  +\intT ((\ub\cdot\nabla)\Pb,\psi\bw)\dt\\&+ \frac{1}{2}\intT ((\Pb\cdot\nabla)(\psi\bw)-((\psi\bw)\cdot\nabla)\Pb,\ub)\dt  = 0
\end{aligned}
\end{equation*}
for all $\phi,\psi\in \mathscr{C}_c^\infty(0,T)$ and $\bv,\bw\in W_m$ with arbitrary $m$ and thus for all $\bv,\bw\in\DAO$ since $\cup_{m\in\mathbb{N}}W_m$ is dense in $\DAO$. It remains to show that $\pb(0)=\pb_0$. In order to show this, we observe that
\begin{equation*}
    \Pb\in W^{1,\frac{4}{3}}(0,T;(\DA)^*)\hookrightarrow \mathscr{C}([0,T];(\DA)^*)\hookrightarrow \mathscr{C}_w([0,T];(\DA)^*),
\end{equation*}
making point evaluations of $\Pb$ in the dual pairing well-defined. Since also $\pb\in L^\infty(0,T;\bLs^2)$, it follows from a well-known lemma (see \textsc{Lions \& Magenes} \cite[Chapter 3, Lemma 8.1]{lionsmagenes}) that
\begin{equation*}
    \pb\in \mathscr{C}_w([0,T];\bLs^2)
\end{equation*}
and in particular
\begin{equation}\label{eq:initialvalue1}
    \Pb(t)\rightharpoonup\Pb(0) \text{ in } \bLs^2(\Omega)
\end{equation}
as $t\rightarrow 0$. Furthermore, for all $\bv\in\DAO$ and $\varphi(t)=(T-t)/T$, there holds
\begin{equation*}
    \langle \Pbn(0)-\Pb(0),\bv\rangle = - \intT\left( \langle \Pbn(t)-\Pb(t),\vb \varphi'(t)\rangle + \langle \partial_t\Pbn(t)-\partial_t\Pb(t),\vb \varphi(t)\rangle \right)\intd t\rightarrow 0 
\end{equation*}
as $n\rightarrow\infty$, by the convergence in \eqref{eq:convP1} and \eqref{eq:convP4}. This implies
\begin{equation}\label{eq:initialvalue}
    \Pbn(0)\rightarrow\Pb(0) \text{ in }(\DAO)^*.
\end{equation}
Since also 
\begin{equation*}
    \Pbn(0)=\Pi_n\Pb_0\rightarrow \Pb_0\in \bL^2_\sigma(\Omega),
\end{equation*}
it follows that
\begin{equation}\label{eq:intialvalue2}
    \Pb(0)=\Pb_0 \text{ in } (\DAO)^*.
\end{equation}
By \eqref{eq:initialvalue1} and \eqref{eq:intialvalue2}, it finally follows that
\begin{equation*}
    \Pb(t)\rightharpoonup\Pb_0 \text{ in } \bLs^2(\Omega)
\end{equation*}
as $t\rightarrow 0$.
\end{proof}
From the above proof of existence, we can easily see that the following energy inequality as well as an estimate for $\ub$ hold true.
\begin{lemma}[Energy inequality]\label{lem:energyineq}
Let $(\ub,\Pb)$ be the weak solution to \eqref{eq:model} in the sense of Definition~\ref{def:weak} that was constructed in the proof of Theorem \eqref{thm:existence}. Then the estimates
\begin{equation}\label{eq:energy}
    \begin{aligned}
         \frac{1}{2}\Vert \Pb(t) \Vert_{\bL^2}^2 + \int_0^t\! \left(  \mu_2  \Vert \Delta\Pb \Vert_{\bL^2}^2+ \gamma_2  \Vert \nabla\Pb \Vert_{\bL^2}^2  + \alpha   \Vert \Pb \Vert_{\bL^4}^4 +\beta \Vert \Pb \Vert_{\bL^2}^2    \right)&  \intd s\le \frac{1}{2}\Vert \Pb_0 \Vert_{\bL^2}^2,
    \end{aligned}
\end{equation}
\begin{equation}\label{eq:energy1}
    \Vert \Pb(t) \Vert_{\bL^2}^2 + \int_0^t\!\left( \mu_2 \Vert \Delta \Pb \Vert_{\bL^2}^2 + \alpha \Vert \Pb \Vert^4_{\bL^4}  \right) \intd s \le  \Vert \Pb_{0} \Vert_{\bL^2}^2 + \frac{T\vert \Omega\vert}{\alpha}\left(\frac{ (\gamma_2^-)^2}{2\mu_2}+\beta^-\right)^2
\end{equation}
and
\begin{equation}\label{eq:energy2}
\begin{aligned}
\int_0^t\Vert  \ub \Vert_{\bL^2}^2 \intd s \le \varepsilon^2c (\Vert \Pb_0 \Vert_{\bL^2}^2 +1)
\end{aligned}
\end{equation}
hold for almost all $t\in(0,T)$.
\end{lemma}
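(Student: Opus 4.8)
The plan is to derive all three estimates by passing to the limit $n\to\infty$ in the relations already available for the \textsc{Galerkin} approximations $(\ubn,\Pbn)$ from the proof of Theorem~\ref{thm:existence}, combining the convergences \eqref{eq:convP1}--\eqref{eq:convP5} with the weak lower semicontinuity of the norms. First I would recall the discrete energy identity obtained by testing \eqref{eq:disc2} with $\Pbn$,
\begin{equation*}
    \frac{1}{2}\frac{\intd}{\intd t}\|\Pbn\|_{\bL^2}^2 + \mu_2\|\Delta\Pbn\|_{\bL^2}^2 + \gamma_2\|\nabla\Pbn\|_{\bL^2}^2 + \alpha\|\Pbn\|_{\bL^4}^4 + \beta\|\Pbn\|_{\bL^2}^2 = 0,
\end{equation*}
and integrate it over $(0,t)$ to arrive at an identity $\tfrac12\|\Pbn(t)\|_{\bL^2}^2+\int_0^t(\cdots)\intd s=\tfrac12\|\Pi_n\Pb_0\|_{\bL^2}^2$. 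The right-hand side converges to $\tfrac12\|\Pb_0\|_{\bL^2}^2$ by the strong convergence $\Pi_n\Pb_0\to\Pb_0$ together with the $\bL^2$-stability of $\Pi_n$, so the whole point is to control the left-hand side in the limit.

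The key observation is that the terms must be treated according to the sign of their coefficients. The summands with a fixed positive factor, namely $\mu_2\int_0^t\|\Delta\Pbn\|_{\bL^2}^2\intd s$, $\alpha\int_0^t\|\Pbn\|_{\bL^4}^4\intd s$ and $\tfrac12\|\Pbn(t)\|_{\bL^2}^2$, only survive under the weak convergences \eqref{eq:convP1}--\eqref{eq:convP4} as \emph{lower} bounds, so weak lower semicontinuity of the norms reproduces the corresponding limit terms with a ``$\le$''. By contrast, the terms $\gamma_2\int_0^t\|\nabla\Pbn\|_{\bL^2}^2\intd s$ and $\beta\int_0^t\|\Pbn\|_{\bL^2}^2\intd s$ carry coefficients of possibly negative sign, for which lower semicontinuity points the wrong way; here I would invoke the \emph{strong} convergence \eqref{eq:convP5} in $L^2(\bH^1_{0,\sigma})$, which yields the genuine convergences $\int_0^t\|\nabla\Pbn\|_{\bL^2}^2\intd s\to\int_0^t\|\nabla\Pb\|_{\bL^2}^2\intd s$ and $\int_0^t\|\Pbn\|_{\bL^2}^2\intd s\to\int_0^t\|\Pb\|_{\bL^2}^2\intd s$. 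Taking the limit inferior of the integrated identity then gives \eqref{eq:energy}.

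The estimate \eqref{eq:energy1} follows either by the same passage to the limit applied directly to the discrete bound \eqref{eq:ap1.fin}, or, equivalently, by repeating on \eqref{eq:energy} the absorption argument already carried out before \eqref{eq:ap1.fin} (\textsc{Young}'s inequality together with $\bL^4\hookrightarrow\bL^2$); both routes reproduce the stated constant. For \eqref{eq:energy2} I would integrate the pointwise bound \eqref{eq:extu}, combine it with \eqref{eq:ap1.fin} to obtain $\int_0^t\|\ubn\|_{\bL^2}^2\intd s\le\varepsilon^2 c(\|\Pb_0\|_{\bL^2}^2+1)$ uniformly in $n$, and then pass to the limit using $\ubn\rightharpoonup\ub$ in $L^2(\bL^2_\sigma)$ and weak lower semicontinuity of the $L^2(\bL^2)$-norm.

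The main obstacle is the pointwise-in-time term $\tfrac12\|\Pb(t)\|_{\bL^2}^2$: the convergences used above hold over the whole cylinder, so before applying lower semicontinuity at a fixed time I must secure $\Pbn(t)\rightharpoonup\Pb(t)$ in $\bLs^2(\Omega)$ for (almost) every $t$. This is precisely where the uniform bound on $\partial_t\Pbn$ in $L^{4/3}((\DA)^*)$ enters: it makes $\{\Pbn\}$ (Hölder-)equicontinuous as maps into $(\DA)^*$, which together with the uniform $\bL^2$-bound from \eqref{eq:ap1.fin} and the weak continuity $\Pb\in\mathscr{C}_w([0,T];\bLs^2)$ established in the proof of Theorem~\ref{thm:existence} yields the required pointwise weak convergence and hence $\tfrac12\|\Pb(t)\|_{\bL^2}^2\le\liminf_n\tfrac12\|\Pbn(t)\|_{\bL^2}^2$.
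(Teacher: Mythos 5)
Your proposal is correct and follows essentially the same route as the paper: test the discretised equation \eqref{eq:disc2} with $\Pbn$, pass to the limit using weak lower semicontinuity for the terms with positive coefficients and the strong convergence \eqref{eq:convP5} for the possibly negative ones, then obtain \eqref{eq:energy1} by the absorption argument behind \eqref{eq:ap1.fin} and \eqref{eq:energy2} from the estimates leading to \eqref{eq:extu}. The only (harmless) deviations are that you derive \eqref{eq:energy2} by passing to the limit in the uniform \textsc{Galerkin} bound rather than, as the paper does, testing the limit formulation \eqref{eq:weak1} with $\ub$ itself, and that you spell out the pointwise-in-time weak convergence $\Pbn(t)\rightharpoonup\Pb(t)$ via the $\partial_t\Pbn$ bound --- a detail the paper leaves implicit but which your argument handles correctly.
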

\begin{proof}
  As seen in the proof of Theorem~\ref{thm:existence}, we obtain the relative energy inequality for the discrete solution $\Pb_n$ by testing the discretised equation \eqref{eq:disc2} with it. The estimate then also holds in the limit, as $n\rightarrow\infty$, by the sequential lower semicontinuity of the norm and employing the strong convergence \eqref{eq:convP5} for those terms with possibly negative coefficients. In the same fashion as we derived the a priori estimate \eqref{eq:ap1.fin}, we then deduce \eqref{eq:energy1} from \eqref{eq:energy}. Finally, the third estimate can be obtained by testing the very weak formulation of the \textsc{Stokes} equation \eqref{eq:weak1} with the solution $\ub$, applying the same steps as to reach \eqref{eq:extu} and using \eqref{eq:energy1}.
\end{proof}

\section{A relative energy inequality}\label{sec:relenergy}

In the previous section, we have constructed a weak solution $(\ub,\pb)$ to \eqref{eq:model} for which the time derivative $\partial_t\pb$ of the polar ordering field is only in $L^\frac{4}{3}(0,T;(\DA)^*)$. Similarly as in the case of the incompressible \textsc{Navier--Stokes} equations, we are therefore not allowed to test the weak formulation \eqref{eq:weak2} with the solution $\pb\in L^2(0,T;\DA)\cap L^\infty(0,T;\bLs^2) \cap L^4(0,T;\bLs^4)$ itself, meaning that the obvious path for proving uniqueness of a solution is blocked. A way to at least partially compensate for the missing uniqueness is the concept of weak-strong uniqueness. A weak solution possesses the weak-strong uniqueness property if it coincides with the unique strong solution as long as it exists. The fact that \textsc{Leray--Hopf} solutions to the \textsc{Navier--Stokes} equations have this property, is well-known (see, e.g., \textsc{Leray} \cite{leray34}, \textsc{Prodi} \cite{prodi59} and \textsc{Serrin} \cite{serrin63}). The goal of this section is now to prove the same for weak solutions to \eqref{eq:model} that fulfill the energy inequality \eqref{eq:energy}. The main argument here will be a suitable relative energy inequality. 

We first define two functionals: the relative energy $\mathscr{E}\colon\bLs^2(\Omega)\times\bLs^2(\Omega)\rightarrow\mathbb{R}_{\ge0}$ by
\begin{align*}
    	\mathscr{E}[\Pb|\Pbt]=& \frac{1}{2}\Vert \Pb -\Pbt\Vert_{\bL^2}^2
\end{align*}
and the relative dissipation $\mathscr{W}: \DAO \times \DAO\rightarrow\mathbb{R}_{\ge0} $ by
\begin{align*}
    	\mathscr{W}[\Pb|\Pbt]= \mu_2\Vert \Delta\Pb -\Delta\Pbt\Vert_{\bL^2}^2 + \gamma_2^+ \Vert \nabla \Pb - \nabla\Pbt\Vert_{\bL^2}^2 +\alpha\Vert \Pb &-\Pbt\Vert_{\bL^4}^4 +\beta^+\Vert \Pb -\Pbt\Vert_{\bL^2}^2,
\end{align*}
where for any real number $a$ we set $a^+\coloneqq a+a^-$. Both the relative energy and the relative dissipation are supposed to provide a way to measure the ``distance'' between two functions. Hence, we have to ensure their nonnegativity. Since we do not know the sign of $\beta$ and $\gamma_2$, we therefore have to correct those possibly negative terms in the relative dissipation $\mathscr{W}$ by taking only their positive parts $\gamma_2^+$ and $\beta^+$, respectively. For $\pb,\pbt\in L^\infty(\bLs^2)\cap L^4(\bLs^4)\cap \bL^2(\DA)$, setting
\begin{equation*}
    \mathscr{E}[\pb,\pbt](t)\coloneqq\mathscr{E}[\pb(t),\pbt(t)] \quad\text{and}\quad \mathscr{W}[\pb,\pbt](t)\coloneqq\mathscr{W}[\pb(t),\pbt(t)],\qquad t\in[0,T],
\end{equation*}
we conclude that $\mathscr{E}[\Pb|\Pbt]\in L^\infty(0,T)$ and $\mathscr{W}[\Pb|\Pbt]\in L^1(0,T)$.
Furthermore, we introduce the mapping \[\mathscr{K}\colon L^2(0,T;\bL^\infty)\cap L^\infty(0,T;\bLs^\frac{6}{5})\cap L^2(0,T;\bW^{1,\infty})\cap L^4(0,T;\bLs^4)\rightarrow L^1(0,T)\] with
	\begin{align}\label{eq:defk}
    	\mathscr{K}[\Pbt](t)\coloneqq \varepsilon c\large(1+ \Vert\nabla\Pbt(t)\Vert_{\bL^\infty}^2+ \Vert \Pbt \Vert_{L^\infty(\bL^\frac{6}{5})}^2\Vert\nabla\Pbt(t)\Vert_{\bL^\infty}^2 +\Vert\Pbt(t)\Vert_{\bL^4}^4 \large),
	\end{align}
where $\varepsilon>0$ is explained by \eqref{eq:coupling} and $c>0$ is a constant only depending on the coefficients $\alpha,\beta,\tilde{\gamma}_1,\gamma_2,\tilde{\lambda}_1,\lambda_2,\tilde{\mu}_1,\mu_2$ and the domain $\Omega$. 

Finally, we introduce an operator encoding the strong formulation of the equations. Let
\begin{equation*}
\mathscr{R}\colon  L^2(\DA) \times (W^{1,2}(\bLs^2) \cap L^6(\bLs^6)\cap L^2(\bH^4\cap \bH^1_{0,\sigma}))\rightarrow L^2(\bL^2)\times L^2(\bL^2)
\end{equation*}
be defined as
\begin{equation*}
\mathscr{R}[\ubt,\Pbt]=\left(\begin{array}{c} \mathscr{R}_1[\ubt,\Pbt]\\
\mathscr{R}_2[\ubt,\Pbt] \end{array}\right), 
\end{equation*}
where
\begin{align*}
\mathscr{R}_1[\ubt,\Pbt]=&-\Delta\ubt+\mu_1\Delta^2\Pbt -\gamma_1\Delta\Pbt + \lambda_1 (\Pbt\cdot\nabla)\Pbt
\end{align*}
and
\begin{align*}
	\mathscr{R}_2[\ubt,\Pbt]=&\partial_t\Pbt + \mu_2\Delta^2\Pbt + (\ubt\cdot\nabla) \Pbt - \gamma_2 \Delta\Pbt  +\lambda_2 (\Pbt\cdot\nabla)\Pbt  +\alpha \vert \Pbt \vert^2\Pbt +\beta \Pbt - \utskw\Pbt.
\end{align*}
Multiplying $\mathscr{R}_2[\ubt,\Pbt]$ by $\Pbt$ and integrating in space and time yields
\begin{equation}\label{eq:shiftenerg}
    \begin{aligned}
         \frac{1}{2}\Vert \Pbt(t) \Vert_{\bL^2}^2 + \int_0^t\! \Big(\mu_2  \Vert \Delta\Pbt \Vert_{\bL^2}^2+ \gamma_2  &\Vert \nabla\Pbt \Vert_{\bL^2}^2 + \alpha   \Vert \Pbt \Vert_{\bL^4}^4 + \beta \Vert \Pbt \Vert_{\bL^2}^2 \Big)  \intd s\\ &= \frac{1}{2}\Vert \Pbt(0) \Vert_{\bL^2}^2 + \int_0^t\!(\mathscr{R}_2[\ubt,\Pbt],\Pbt)\intd s.
    \end{aligned}
\end{equation}
Note that if we choose $(\ubt,\Pbt)$ as a strong solution to \eqref{eq:model} then $\mathscr{R}[\ubt,\Pbt]$ vanishes and the identity \eqref{eq:shiftenerg} becomes the energy equality in \eqref{eq:energy}. In this sense, we can think about \eqref{eq:shiftenerg} as a shifted energy equality.

We now can state the desired relative energy inequality in a way that allows us to compare a weak solution that fulfills the energy inequality \eqref{eq:energy} with an arbitrary pair of functions with additional regularity, which cannot be expected for weak solutions.
\begin{theorem}\label{thm:releng}
Let $(\ub,\Pb)$ be a weak solution to \eqref{eq:model} in the sense of Definition~\ref{def:weak} that additionally fulfills the energy inequality \eqref{eq:energy}. Then for any pair $(\ubt,\Pbt)$ of test functions such that
\begin{equation*}
    \begin{aligned}
        \ubt\in L^2(\DA),\; \Pbt\in W^{1,2}(\bLs^2) \cap L^6(\bLs^6)\cap L^2(\bH^4\cap \bH^1_{0,\sigma}),
    \end{aligned}
\end{equation*}
the relative energy inequality
\begin{equation}\label{eq:relengery}
    \begin{aligned}
        \mathscr{E}[\Pb|\Pbt](t)+\frac{1}{2}\int_{0}^{t}\!\mathscr{W}[\Pb|\Pbt]\intd s \le \mathscr{E}&[\Pb_0|\Pbt(0)] +  \int_{0}^{t}\!\mathscr{K}[\Pbt]\mathscr{E}[\Pb|\Pbt]\intd s \\&+ \int_{0}^{t}\!\left(\mathscr{R}[\ubt,\Pbt],\left(\begin{array}{c} A^{-1}(\ubt-\ub)\\ \Pbt-\Pb \end{array}\right)\right)\intd s
    \end{aligned}
\end{equation}
holds for almost all $t\in(0,T)$.
\end{theorem}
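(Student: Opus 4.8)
The plan is to expand the relative energy pointwise in time as
$\mathscr{E}[\Pb|\Pbt](t)=\tfrac12\Vert\Pb(t)\Vert_{\bL^2}^2-(\Pb(t),\Pbt(t))+\tfrac12\Vert\Pbt(t)\Vert_{\bL^2}^2$
and to control the three contributions separately. For the first summand I would invoke the energy inequality \eqref{eq:energy}, and for the third the shifted energy equality \eqref{eq:shiftenerg}. Adding the two, the initial contributions combine to $\mathscr{E}[\Pb_0|\Pbt(0)]=\tfrac12\Vert\Pb_0\Vert_{\bL^2}^2+\tfrac12\Vert\Pbt(0)\Vert_{\bL^2}^2-(\Pb_0,\Pbt(0))$, the dissipative terms $-\mu_2\Vert\Delta\Pb\Vert_{\bL^2}^2,-\gamma_2\Vert\nabla\Pb\Vert_{\bL^2}^2,-\alpha\Vert\Pb\Vert_{\bL^4}^4,-\beta\Vert\Pb\Vert_{\bL^2}^2$ (and their $\Pbt$-analogues) are produced with the correct signs, and the residual $\int_0^t(\mathscr{R}_2[\ubt,\Pbt],\Pbt)$ appears. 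It then remains to treat the mixed term $(\Pb,\Pbt)$.

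For the mixed term I would use that, since $\Pb\in\mathscr{C}_w([0,T];\bLs^2)$ with $\partial_t\Pb\in L^{4/3}((\DA)^*)$ and $\Pbt\in W^{1,2}(\bLs^2)\cap L^4(\DA)$ (the last membership obtained by interpolating $L^2(\bH^4)$ with $L^\infty(\bLs^2)$), the product rule $\tfrac{\mathrm{d}}{\mathrm{d}s}(\Pb,\Pbt)=\langle\partial_s\Pb,\Pbt\rangle+(\Pb,\partial_s\Pbt)$ holds in $L^1(0,T)$. For $\langle\partial_s\Pb,\Pbt\rangle$ I would insert the admissible test function $\Pbt$ into \eqref{eq:weak2}, justified by the extension of admissible test functions discussed in Remark~\ref{rem:welldef}; for $(\Pb,\partial_s\Pbt)$ I would replace $\partial_s\Pbt$ by $\mathscr{R}_2[\ubt,\Pbt]$ minus the remaining terms of $\mathscr{R}_2$, integrating by parts in the biharmonic and Laplacian contributions using the boundary conditions. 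Collecting the resulting symmetric pairs and completing squares turns the linear dissipative terms into $-\mu_2\Vert\Delta(\Pb-\Pbt)\Vert_{\bL^2}^2-\gamma_2\Vert\nabla(\Pb-\Pbt)\Vert_{\bL^2}^2-\beta\Vert\Pb-\Pbt\Vert_{\bL^2}^2$, while the four cubic pairings assemble into $-\alpha(|\Pb|^2\Pb-|\Pbt|^2\Pbt,\Pb-\Pbt)$; by the pointwise estimate $(|a|^2a-|b|^2b)\cdot(a-b)=\tfrac12(|a|^2-|b|^2)^2+\tfrac12(|a|^2+|b|^2)|a-b|^2\ge\tfrac14|a-b|^4$ this is bounded above by a negative multiple of $\Vert\Pb-\Pbt\Vert_{\bL^4}^4$, with the weighted remainder available to absorb lower-order errors.

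To produce exactly the dissipation $\mathscr{W}$ I would split the sign-indefinite coefficients via $a=a^+-a^-$: the parts $-\gamma_2^+\Vert\nabla(\Pb-\Pbt)\Vert_{\bL^2}^2$ and $-\beta^+\Vert\Pb-\Pbt\Vert_{\bL^2}^2$ are retained, whereas the surplus $\gamma_2^-\Vert\nabla(\Pb-\Pbt)\Vert_{\bL^2}^2+\beta^-\Vert\Pb-\Pbt\Vert_{\bL^2}^2$ is absorbed, the gradient part through $\Vert\nabla(\Pb-\Pbt)\Vert_{\bL^2}^2\le\Vert\Pb-\Pbt\Vert_{\bL^2}\Vert\Delta(\Pb-\Pbt)\Vert_{\bL^2}$ and Young's inequality, into a fraction of the $\mu_2$-dissipation and into a multiple of $\mathscr{E}[\Pb|\Pbt]$; this is why only $\tfrac12\mathscr{W}$ survives on the left. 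The velocity-free nonlinearity $\lambda_2(\Pb\cdot\nabla)\Pb$ contributes, after writing everything in $w=\Pb-\Pbt$ and using that $((w\cdot\nabla)\Pbt,\Pbt)$ vanishes for solenoidal $w$, only $-\lambda_2((w\cdot\nabla)\Pbt,w)$, estimated by $\Vert\nabla\Pbt\Vert_{\bL^\infty}\mathscr{E}[\Pb|\Pbt]$ and hence of Grönwall form.

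The genuinely delicate part is the velocity coupling. Here I would exploit that $\ub$ is slaved to $\Pb$ through the very weak Stokes equation \eqref{eq:weak1}: testing it with a suitable solenoidal field and comparing with $\mathscr{R}_1[\ubt,\Pbt]$ reproduces the pairing $(\mathscr{R}_1[\ubt,\Pbt],A^{-1}(\ubt-\ub))$ on the right-hand side, while the remaining couplings $((\ub\cdot\nabla)\Pb,\Pbt)$, $\tfrac12((\Pb\cdot\nabla)\Pbt-(\Pbt\cdot\nabla)\Pb,\ub)$, $(\Pb,(\ubt\cdot\nabla)\Pbt)$ and $(\Pb,\utskw\Pbt)$ would be reorganised in terms of $w$ and $\ub-\ubt$. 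Using H\"older's and Young's inequalities together with the embeddings $\bH^2\hookrightarrow\bW^{1,6}\hookrightarrow\bL^\infty$ and the a priori bound $\int_0^t\Vert\ub\Vert_{\bL^2}^2\le\varepsilon^2c(\Vert\Pb_0\Vert_{\bL^2}^2+1)$ from \eqref{eq:energy2}, these terms are estimated by $\mathscr{K}[\Pbt]\mathscr{E}[\Pb|\Pbt]$ plus absorbable multiples of $\mathscr{W}[\Pb|\Pbt]$ and of the $\mathscr{R}_1$-pairing; it is precisely this step that generates the factor $\varepsilon$ and the norms $\Vert\nabla\Pbt\Vert_{\bL^\infty}^2$, $\Vert\Pbt\Vert_{L^\infty(\bL^{6/5})}^2\Vert\nabla\Pbt\Vert_{\bL^\infty}^2$ and $\Vert\Pbt\Vert_{\bL^4}^4$ collected in \eqref{eq:defk}. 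The main obstacle will be to arrange all mixed velocity–polarisation terms so that the biharmonic dissipation $\mu_2\Vert\Delta(\Pb-\Pbt)\Vert_{\bL^2}^2$ and the $A^{-1}$-weighted velocity differences suffice to absorb them while leaving the surviving error strictly of the form $\mathscr{K}\mathscr{E}$; a secondary technical point is the rigorous justification, under the low temporal regularity $\partial_t\Pb\in L^{4/3}((\DA)^*)$, of the integration by parts in time and of using $\Pbt$ as a test function in \eqref{eq:weak2}.
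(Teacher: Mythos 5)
Your proposal follows the paper's own proof essentially step by step: the expansion of $\mathscr{E}[\Pb|\Pbt](t)$ combining the energy inequality \eqref{eq:energy} for $\Pb$, the shifted energy identity \eqref{eq:shiftenerg} for $\Pbt$ and a time integration-by-parts formula for the mixed term (the paper's Lemma~\ref{lem:intby}, proved by density, with the same interpolation argument giving $\Pbt\in L^4(\DA)$); testing \eqref{eq:weak2} with $\Pbt$ after adding and subtracting $\mathscr{R}_2[\ubt,\Pbt]$; absorbing the $\gamma_2^-$- and $\beta^-$-terms via $\Vert\nabla(\Pb-\Pbt)\Vert_{\bL^2}^2\le\Vert\Pb-\Pbt\Vert_{\bL^2}\Vert\Delta(\Pb-\Pbt)\Vert_{\bL^2}$; and, crucially, slaving $\Vert\ub-\ubt\Vert_{\bL^2}$ to $\mathscr{W}[\Pb|\Pbt]$ and the $\mathscr{R}_1$-pairing through the very weak Stokes equation \eqref{eq:weak1}, which is precisely the content of the paper's Lemma~\ref{lem:coerc} and the source of the factor $\varepsilon$ in $\mathscr{K}$. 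The only deviations are cosmetic: the paper builds the quartic dissipation from the binomial expansion of $\vert\boldsymbol{a}-\boldsymbol{b}\vert^4$ rather than your monotonicity inequality (whose sharp constant $\tfrac14$ would deliver \eqref{eq:relengery} with a somewhat smaller, but equally serviceable, constant in front of the quartic part of $\mathscr{W}$), and \eqref{eq:energy2} plays no role in this proof — Lemma~\ref{lem:coerc} bounds $\ub-\ubt$ directly from \eqref{eq:weak1}, with \eqref{eq:energy2} used only in the $\varepsilon\rightarrow0$ limit of Section~\ref{sec:connect}.
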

In order to prove the relative energy inequality above, we first need two lemmata: an integration-by-parts formula and a lower bound for the relative dissipation $\mathscr{W}$ in terms of the distance of $\ub$ and $\ubt$.
\begin{lemma}[Integration-by-parts formula]\label{lem:intby}
For all \[\Pb\in L^\infty(\bL^2) \cap L^4(\bL^4) \cap L^2(\bH^2\cap \bH^1_{0,\sigma})\] such that $\partial_t\pb\in L^{\frac{4}{3}}((\DA)^*)$ and all \[\Pbt\in L^4(\DA)\cap W^{1,2}((\DA)^*)\] there holds
\begin{equation}\label{eq:intbyp}
    \int_{s}^{t}\!\left( \langle \partial_t\Pb,\Pbt \rangle + \langle \Pb,\partial_t\Pbt\rangle\right)\intd \tau=(\Pb(t),\Pbt(t))-(\Pb(s),\Pbt(s))
\end{equation}
for all $s,t\in[0,T]$.
\end{lemma}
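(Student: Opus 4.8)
The plan is to reduce the claimed identity to the elementary product rule for functions that are smooth in time, obtained by temporal mollification, and then to pass to the limit. The decisive structural observation is that, although neither $\Pb$ nor $\Pbt$ individually satisfies the hypotheses of the classical integration-by-parts formula in a Gelfand triple (the exponents of $\Pb\in L^2(\DA)$ and $\partial_t\Pb\in L^{4/3}((\DA)^*)$ are not conjugate, and likewise for $\Pbt$), the two \emph{cross} pairings that actually appear are exactly dual: $\partial_t\Pb\in L^{4/3}((\DA)^*)$ pairs with $\Pbt\in L^4(\DA)$, and $\Pb\in L^2(\DA)$ pairs with $\partial_t\Pbt\in L^2((\DA)^*)$. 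Hence, by H\"older's inequality with the conjugate exponents $(4/3,4)$ and $(2,2)$, both integrands on the left-hand side of \eqref{eq:intbyp} belong to $L^1(0,T)$, so the left-hand side is well-defined.

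Next I would make sense of the endpoint terms in $\bL^2_\sigma$. For $\Pbt$, the inclusion $\partial_t\Pbt\in L^2((\DA)^*)\hookrightarrow L^{4/3}((\DA)^*)$ together with $\Pbt\in L^4(\DA)$ puts $\Pbt$ into the classical evolution-triple setting with conjugate exponents $p=4$, $p'=4/3$, so that $\Pbt\in\mathscr{C}([0,T];\bL^2_\sigma)$ with \emph{strong} continuity. For $\Pb$, exactly as in the proof of Theorem~\ref{thm:existence}, the bound $\Pb\in L^\infty(\bL^2_\sigma)$ combined with $\partial_t\Pb\in L^{4/3}((\DA)^*)$ yields $\Pb\in\mathscr{C}_w([0,T];\bL^2_\sigma)$. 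Consequently the map $t\mapsto(\Pb(t),\Pbt(t))$ is continuous on $[0,T]$, being a pairing of a weakly continuous factor against a strongly continuous one, so the right-hand side of \eqref{eq:intbyp} is well-defined and depends continuously on $s$ and $t$.

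I would then carry out the mollification. Fixing $0<s<t<T$ and a standard mollifier $\eta_\delta$ supported in $(-\delta,\delta)$ with $\delta<\min\{s,T-t\}$, the temporal convolutions $\Pb^\delta\coloneqq\eta_\delta*\Pb$ and $\Pbt^\delta\coloneqq\eta_\delta*\Pbt$ are smooth in time with values in $\DA$ on $[s,t]$, and mollification commutes with the time derivative. For these the classical product rule and the fundamental theorem of calculus give
\[
\int_s^t\!\big((\partial_\tau\Pb^\delta,\Pbt^\delta)+(\Pb^\delta,\partial_\tau\Pbt^\delta)\big)\intd\tau=(\Pb^\delta(t),\Pbt^\delta(t))-(\Pb^\delta(s),\Pbt^\delta(s)).
\]
Letting $\delta\to0$, the standard convergence of mollifications gives $\Pb^\delta\to\Pb$ in $L^2(\DA)$ and $\partial_t\Pb^\delta\to\partial_t\Pb$ in $L^{4/3}((\DA)^*)$, and likewise $\Pbt^\delta\to\Pbt$ in $L^4(\DA)$ and $\partial_t\Pbt^\delta\to\partial_t\Pbt$ in $L^2((\DA)^*)$; splitting each cross term into a difference and invoking the conjugate H\"older pairings from the first paragraph shows that both integrals converge to the desired limits. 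For the boundary terms, at each interior time $\tau$ one has $\Pb^\delta(\tau)\rightharpoonup\Pb(\tau)$ in $\bL^2_\sigma$ (by weak continuity and boundedness of $\Pb$) and $\Pbt^\delta(\tau)\to\Pbt(\tau)$ strongly in $\bL^2_\sigma$, so the weak--strong pairing $(\Pb^\delta(\tau),\Pbt^\delta(\tau))\to(\Pb(\tau),\Pbt(\tau))$ converges. This establishes \eqref{eq:intbyp} for all $0<s<t<T$, after which the identity extends to all $s,t\in[0,T]$, including the endpoints $0$ and $T$, by the continuity of both sides proved in the second paragraph.

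The step I expect to require the most care is precisely the control of the boundary terms under mollification, together with the fact that $\Pb$ is only weakly continuous into $\bL^2_\sigma$ while $\Pbt$ is strongly continuous. Isolating this weak--strong continuity of the pairing is what legitimises the pointwise-in-time limits of the mollified endpoint values and what ultimately compensates for the fact that the self-pairing exponents of each function are mismatched, so that the classical symmetric integration-by-parts lemma cannot be quoted directly.
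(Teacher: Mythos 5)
Your proof is correct, and it reaches the identity by a genuinely different route than the paper, although both arguments pivot on the same structural observation that the two cross pairings are H\"older-conjugate ($\partial_t\Pb\in L^{4/3}((\DA)^*)$ against $\Pbt\in L^4(\DA)$, and $\Pb\in L^2(\DA)$ against $\partial_t\Pbt\in L^2((\DA)^*)$), even though neither function separately satisfies the symmetric hypotheses of the classical lemma. The paper argues abstractly: it invokes the density of $\mathscr{C}^1([0,T];\DA)$ in the two graph spaces $L^2(\DA)\cap W^{1,\frac{4}{3}}((\DA)^*)$ and $L^4(\DA)\cap W^{1,2}((\DA)^*)$, passes to the limit only in the \emph{distributional} identity tested against $\varphi\in\mathscr{C}_c^\infty(0,T)$, and then concludes that $t\mapsto(\Pb(t),\Pbt(t))$ lies in $W^{1,1}(0,T)$, hence is absolutely continuous, so the fundamental theorem of calculus yields \eqref{eq:intbyp}; no pointwise-in-time limit of the approximants is ever taken, and no continuity of $\Pbt$ into $\bL^2_\sigma$ is needed. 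You instead mollify in time, apply the classical product rule on interior intervals, and pass to the limit directly in the \emph{integrated} identity including the endpoint terms, which forces you to first establish $\Pbt\in\mathscr{C}([0,T];\bL^2_\sigma)$ via the evolution-triple theorem with exponents $(4,\frac{4}{3})$ and $\Pb\in\mathscr{C}_w([0,T];\bL^2_\sigma)$ via the \textsc{Lions--Magenes} lemma, and then to exploit the weak--strong convergence of the pairing $(\Pb^\delta(\tau),\Pbt^\delta(\tau))$. Your route is more constructive (the mollification realises explicitly the density the paper merely asserts) and arguably more careful at the endpoints: the paper's appeal to the fundamental theorem of calculus for the absolutely continuous representative tacitly requires that the pointwise-defined pairing \emph{is} that representative at every $t\in[0,T]$, which is precisely what your continuity analysis supplies; the paper's route, in exchange, is shorter and avoids all pointwise-in-time machinery. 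One small remark: your boundedness observation for $\Pb^\delta(\tau)$ is not strictly needed for the weak convergence itself, since $(\Pb^\delta(\tau),\bv)\to(\Pb(\tau),\bv)$ already holds for every $\bv\in\bL^2_\sigma$ by the weak continuity of $\Pb$; it is, however, exactly what you need for the weak--strong splitting $(\Pb^\delta,\Pbt^\delta-\Pbt)+(\Pb^\delta,\Pbt)$, so its inclusion is apt.
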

\begin{proof}
 %First we notice that $\partial_t\Pb\in L^2((\bW^{1,\infty})^*)$ since by \eqref{eq:weak2}
 First, we choose sequences $\{\Pb_n\},\{\Pbt_n\}\subset \mathscr{C}^1([0,T];\DA)$ such that
 %and $\{\Pbt_n\}\subset \mathscr{C}^1([0,T];\DAO)$ 
\begin{equation*}
    \begin{aligned}
        \Pb_n\rightarrow\Pb\text{ in } L^2(\DA)\cap W^{1,\frac{4}{3}}((\DA)^*),\\
        \Pbt_n\rightarrow\Pbt\text{ in } L^4(\DA)\cap W^{1,2}((\DA)^*).
    \end{aligned}
\end{equation*}
Such sequences exist since $\mathscr{C}^1([0,T];\DA)$ is dense in both $L^2(\DA)\cap W^{1,\frac{4}{3}}((\DA)^*)$ and $L^4(\DA)\cap W^{1,2}((\DA)^*)$. By the higher time-regularity of the approximating sequence, we find
\begin{equation}\label{eq:intbyp1}
    -\int_0^T\!\varphi'(\Pb_n,\Pbt_n) \intd t= \int_{0}^{T}\! \varphi\left(\langle\partial_t\Pb_n,\Pbt_n\rangle + (\Pb_n,\partial_t\Pbt_n))\right)\intd t
\end{equation}
for all $\varphi\in\mathscr{C}_c^\infty(0,T)$. 
From the estimates
\begin{equation*}
    \begin{aligned}
         \int_{0}^{T}\! \vert \langle\partial_t\Pb,\Pbt\rangle -& \langle\partial_t\Pb_n,\Pbt_n\rangle  \vert\intd t  \\ \le& \Vert \partial_t\Pb - \partial_t\Pb_n\Vert_{L^\frac{4}{3}((\DA)^*)} \Vert \Pbt \Vert_{L^4(\DA)} \\&+ \Vert \partial_t\Pb_n\Vert_{L^\frac{4}{3}((\DA)^*)} \Vert \Pbt - \Pbt_n \Vert_{L^4(\DA)}
    \end{aligned}    
\end{equation*}
and
\begin{equation*}
    \begin{aligned}
        \int_{0}^{T}\! \vert(\Pb,\partial_t\Pbt) -& (\Pb_n,\partial_t\Pbt_n) \vert  \intd t  \\ \le& \Vert \Pb\Vert_{L^2(\DA)} \Vert \partial_t\Pbt -\partial_t\Pbt_n \Vert_{L^2((\DA)^*)} \\&+ \Vert \Pb -\Pb_n\Vert_{L^2(\DA)} \Vert \partial_t\Pbt_n \Vert_{L^2((\DA)^*)}
    \end{aligned}    
\end{equation*}
as well as 
\begin{align*}
    \int_0^T\!\vert (\Pb,\Pbt) &- (\Pb_n,\Pbt_n)\vert \intd t \\ \le& \Vert \Pb\Vert_{L^2(\bL^2)} \Vert \Pbt -\Pbt_n \Vert_{L^2(\bL^2)} + \Vert \Pb -\Pb_n\Vert_{L^2(\bL^2)} \Vert \Pbt_n \Vert_{L^2(\bL^2)},
\end{align*}
together with the embedding $L^4(\DA)\hookrightarrow L^2(\DA) \hookrightarrow L^2(\bL^2)$, we conclude that the equality \eqref{eq:intbyp1} holds also in the limit as $n\rightarrow\infty$, i.e.,
\begin{equation}\label{eq:intbypartsform}
    -\int_0^T\!\varphi'(\Pb,\Pbt) \intd t= \int_{0}^{T}\! \varphi\left(\langle\partial_t\Pb,\Pbt\rangle + (\Pb,\partial_t\Pbt))\right)\intd t
\end{equation}
for all $\phi\in \mathscr{C}_c^\infty(0,T)$. Note that $\Vert \partial_t\Pb_n\Vert_{L^\frac{4}{3}((\DA)^*)}$, $\Vert \partial_t\Pbt_n \Vert_{L^2((\DA)^*)}$ and $\Vert \Pbt_n \Vert_{L^2(\bL^2)}$ are bounded uniformly in $n$. Using the same embedding as above, we find 
\begin{equation*}
    \intT \vert (\pb,\pbt) \vert \intd t \le c \Vert \pb \Vert_{L^2(\DA)} \Vert \pbt \Vert_{L^2(\DA)}. 
\end{equation*}
Since also 
\begin{align*}
    \intT\vert \langle\partial_t&\Pb,\Pbt\rangle + (\Pb,\partial_t\Pbt) \vert \intd t\\ &\le \Vert \partial_t\pb \Vert_{L^\frac{4}{3}((\DA)*)} \Vert \pbt \Vert_{L^4(\DA)} + \Vert \pb \Vert_{L^2(\DA)}\Vert \partial_t \pbt \Vert_{L^2((\DA)^*)}, 
\end{align*}
it follows from \eqref{eq:intbypartsform} that the mapping
\begin{equation}\label{eq:mapping}
    t\mapsto (\Pb(t),\Pbt(t))
\end{equation}
is in $W^{1,1}(0,T)$ since the weak derivative
\begin{equation*}
    t\mapsto \langle\partial_t\Pb(t),\Pbt(t)\rangle + (\Pb(t),\partial_t\Pbt(t))
\end{equation*}
is in $L^1(0,T)$. In particular, the function \eqref{eq:mapping} is absolutely continuous on $[0,T]$ and we can apply the fundamental theorem of calculus to finally prove the statement.
\end{proof}
\begin{lemma}\label{lem:coerc}
Let $(\ub,\Pb)$ be a weak solution to \eqref{eq:model} in the sense of Definition~\ref{def:weak} and $\varepsilon$ be chosen as in \eqref{eq:coupling}. Then there exists a constant $C>0$ such that for all 
\begin{equation*}
    \begin{aligned}
        \ubt&\in L^2(\DA),\; \Pbt\in W^{1,2}(\bLs^2) \cap L^6(\bLs^6)\cap L^2(\bH^4\cap \bH^1_{0,\sigma})
    \end{aligned}
\end{equation*}
there holds
\begin{equation}\label{eq:coerc1}
    \begin{aligned}
       \frac{1}{2}\int_0^t \Vert \ub - \ubt \Vert^2_{\bL^2} \le 
     \varepsilon C (1+\Vert \Pbt \Vert_{L^\infty(\bL^{\frac{6}{5}})}^2)\int_0^t\!\mathscr{W}[\Pb,\Pbt]\intd s +\int_0^t\!(\mathscr{R}_1[\ubt,\Pbt],A^{-1}(\ubt-\ub))\intd s
    \end{aligned}
\end{equation}
for all $t\in[0,T]$.
\end{lemma}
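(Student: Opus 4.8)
The plan is to express $\Vert\ub-\ubt\Vert_{\bL^2}^2$ through the very weak formulation of the Stokes equation and the residual $\mathscr{R}_1$, so that the dependence on the weak solution enters only via the differences $\Pb-\Pbt$ that the relative dissipation $\mathscr{W}$ controls. First I set $\bv:=A^{-1}(\ubt-\ub)\in\DAO$, which by the boundedness of $A^{-1}\colon\bLs^2(\Omega)\to\DAO$ satisfies $\Vert\Delta\bv\Vert_{\bL^2}$, $\Vert\bv\Vert_{\bL^2}$ and, via $\bH^2(\Omega)\hookrightarrow\bL^\infty(\Omega)$, also $\Vert\bv\Vert_{\bL^\infty}$ all bounded by $c\Vert\ub-\ubt\Vert_{\bL^2}$. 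I would test the weak Stokes equation \eqref{eq:weak1} for $\ub$ with $\phib=\ubt-\ub$ and separately pair the defining expression of $\mathscr{R}_1[\ubt,\Pbt]$ with $\bv$, using $(-\Delta\ubt,\bv)=(\ubt,\ubt-\ub)$ and, after an integration by parts that exploits $\bv|_{\partial\Omega}=0$ together with the boundary condition $\Delta\Pbt|_{\partial\Omega}=0$, the identity $(\Delta^2\Pbt,\bv)=(\Delta\Pbt,\Delta\bv)$. Subtracting the two relations, the velocity terms combine to $\Vert\ub-\ubt\Vert_{\bL^2}^2$ while the remaining contributions reorganise into differences, yielding an identity of the schematic form
\begin{equation*}
    \Vert\ub-\ubt\Vert_{\bL^2}^2 = (\mathscr{R}_1[\ubt,\Pbt],\bv) + \mu_1(\Delta(\Pb-\Pbt),\Delta\bv) - \gamma_1(\Delta(\Pb-\Pbt),\bv) + \lambda_1((\Pb\cdot\nabla)\Pb-(\Pbt\cdot\nabla)\Pbt,\bv).
\end{equation*}
The first term on the right is exactly the one appearing in \eqref{eq:coerc1}, so it remains to bound the three error terms by $\tfrac12\Vert\ub-\ubt\Vert_{\bL^2}^2 + \varepsilon C(1+\Vert\Pbt\Vert_{L^\infty(\bL^{\frac{6}{5}})}^2)\mathscr{W}[\Pb,\Pbt]$.

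For the two linear terms I would use that $\mu_1=\varepsilon\tilde{\mu}_1$ and $\gamma_1=\varepsilon\tilde{\gamma}_1$ by \eqref{eq:coupling}, together with the bounds on $\bv$, to estimate for instance $|\mu_1(\Delta(\Pb-\Pbt),\Delta\bv)| \le \varepsilon\tilde{\mu}_1 c\Vert\Delta(\Pb-\Pbt)\Vert_{\bL^2}\Vert\ub-\ubt\Vert_{\bL^2}$. A Young inequality absorbs a multiple of $\Vert\ub-\ubt\Vert_{\bL^2}^2$ and leaves a term $\varepsilon^2C\Vert\Delta(\Pb-\Pbt)\Vert_{\bL^2}^2$, which is controlled by $\varepsilon C\mathscr{W}[\Pb,\Pbt]$ since $\mu_2\Vert\Delta(\Pb-\Pbt)\Vert_{\bL^2}^2\le\mathscr{W}[\Pb,\Pbt]$ and $\varepsilon^2\le\varepsilon$ in the regime of small coupling. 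The $\gamma_1$-term is handled identically, charging it again to the always-present (as $\mu_2>0$) biharmonic part of $\mathscr{W}$ rather than to $\gamma_2^+\Vert\nabla(\Pb-\Pbt)\Vert_{\bL^2}^2$, which may vanish.

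The nonlinear term is the main obstacle. Writing $\qb:=\Pb-\Pbt$ and decomposing (all fields being divergence-free)
\begin{equation*}
    (\Pb\cdot\nabla)\Pb-(\Pbt\cdot\nabla)\Pbt = (\Pbt\cdot\nabla)\qb + (\qb\cdot\nabla)\Pbt + (\qb\cdot\nabla)\qb,
\end{equation*}
I would estimate the three contributions paired with $\bv$ separately. The purely quadratic term gives $|\lambda_1((\qb\cdot\nabla)\qb,\bv)|\le\varepsilon\tilde{\lambda}_1 c\Vert\qb\Vert_{\bL^4}^2\Vert\ub-\ubt\Vert_{\bL^2}$, absorbable into $\alpha\Vert\qb\Vert_{\bL^4}^4\le\mathscr{W}$ after Young. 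The terms containing $\Pbt$ are where the exponent $\tfrac{6}{5}$ enters: using $\bv\in\bL^\infty$ and $\Vert\nabla\qb\Vert_{\bL^6}\le c\Vert\Delta\qb\Vert_{\bL^2}$ one bounds $|((\Pbt\cdot\nabla)\qb,\bv)|\le\Vert\Pbt\Vert_{\bL^{\frac{6}{5}}}\Vert\nabla\qb\Vert_{\bL^6}\Vert\bv\Vert_{\bL^\infty}\le c\Vert\Pbt\Vert_{\bL^{\frac{6}{5}}}\Vert\Delta\qb\Vert_{\bL^2}\Vert\ub-\ubt\Vert_{\bL^2}$, and after Young this produces precisely the prefactor $\Vert\Pbt\Vert_{\bL^{\frac{6}{5}}}^2$ times the biharmonic part of $\mathscr{W}$. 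The remaining mixed term $((\qb\cdot\nabla)\Pbt,\bv)$ I would treat in the same $\bL^\infty$-on-$\bv$ spirit (moving the derivative by integration by parts and distributing the norms so that only $\Pbt$ in a low-integrability norm and $\qb$-factors controlled by $\mathscr{W}$ survive); this is the most delicate bookkeeping and the point where one must avoid creating norms of $\Pbt$ stronger than $\bL^{\frac{6}{5}}$ or of $\qb$ outside $\mathscr{W}$.

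Finally, collecting all estimates, the accumulated $\Vert\ub-\ubt\Vert_{\bL^2}^2$ contributions are arranged (by choosing the Young parameters small) to sum to at most $\tfrac12\Vert\ub-\ubt\Vert_{\bL^2}^2$, which is moved to the left-hand side; integrating the resulting pointwise-in-time inequality over $(0,t)$ gives \eqref{eq:coerc1}. I expect the decisive difficulties to be (i) the nonlinear estimate, in particular realising the $\Pbt$-dependence through $\Vert\Pbt\Vert_{\bL^{\frac{6}{5}}}$ alone while keeping every $\qb$-factor inside $\mathscr{W}$, and (ii) justifying the biharmonic integration by parts $(\Delta^2\Pbt,\bv)=(\Delta\Pbt,\Delta\bv)$, which rests on $\Delta\Pbt$ vanishing on $\partial\Omega$ and, if not part of the admissible class, must be recovered by a density argument.
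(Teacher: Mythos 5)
Correct, and essentially the paper's own argument: the paper likewise starts from \eqref{eq:weak1} tested with the velocity difference, adds and subtracts $(\mathscr{R}_1[\ubt,\Pbt],A^{-1}(\ubt-\ub))$, groups the $\mu_1$-, $\gamma_1$- and $\lambda_1$-contributions as differences, rewrites the nonlinear one in the divergence form $\Pbt\otimes(\Pbt-\Pb)-(\Pbt-\Pb)\otimes(\Pbt-\Pb)+(\Pbt-\Pb)\otimes\Pbt$ and estimates it in $\bL^{\frac{6}{5}}$--$\bL^{6}$ duality against $\nabla A^{-1}(\ubt-\ub)$, which produces exactly your prefactor $1+\Vert\Pbt\Vert_{\bL^{\frac{6}{5}}}$ before Young's inequality and absorption, with the $\mu_2$-part of $\mathscr{W}$ absorbing both the $\gamma_1$-term and the $\Pbt$-dependent nonlinear terms just as you arrange it. Your two flagged worries are benign: the mixed term, after the integration by parts you describe, is bounded by $\Vert\Pbt\Vert_{\bL^{\frac{6}{5}}}\Vert\Pb-\Pbt\Vert_{\bL^\infty}\Vert\nabla A^{-1}(\ubt-\ub)\Vert_{\bL^6}\le c\Vert\Pbt\Vert_{\bL^{\frac{6}{5}}}\Vert\Delta(\Pb-\Pbt)\Vert_{\bL^2}\Vert\ub-\ubt\Vert_{\bL^2}$, exactly like its companion, while the biharmonic integration by parts is performed silently in the paper as well -- note, though, that its boundary term $\int_{\partial\Omega}\Delta\Pbt\cdot\partial_n A^{-1}(\ubt-\ub)\intd S$ cannot be recovered by a density argument for general $\Pbt\in\bH^4\cap\bH^1_{0,\sigma}$ (it is genuinely nonzero there), but it vanishes in the only intended application, where $\Pbt$ is a strong solution satisfying $\Delta\Pbt=0$ on $\partial\Omega$ in accordance with \eqref{eq:initcond}.
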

Note that we could allow for a larger class of test functions $(\ubt,\Pbt)$ in this lemma by shifting some of the derivatives contained in $\mathcal{R}_1[\ubt,\Pbt]$ onto $A^-1(\ubt-\ub)$ using an integration-by-parts formula. Since we only need to be able to apply estimate \eqref{eq:coerc1} to strong solutions, we restrict ourselves here to the regularity assumptions above.
\begin{proof}
Using equation \eqref{eq:weak1} as well as adding and subtracting $(\mathscr{R}_1[\ubt,\Pbt],A^{-1}(\ubt-\ub))$ yields
  \begin{equation*}
      \begin{aligned}
         \int_0^t\!\Vert \ub - \ubt &\Vert_{\bL^2}^2 \intd s 
         %\int_0^t\! \left( (\ub, \ub - \ubt) - (\ubt, \ub - \ubt) \right) \intd s \\
        \\=& \mu_1\int_0^t\!  (\Delta\Pbt - \Delta\Pb, \Delta A^{-1}(\ub - \ubt ))\intd s  -\gamma_1 \int_0^t\!  (\Delta\Pbt - \Delta\Pb, A^{-1}(\ub - \ubt ) ) \intd s\\
        &+\lambda_1\int_0^t\!\left( ((\Pbt\cdot\nabla)\Pbt,A^{-1}(\ub - \ubt )  - ((\Pb\cdot\nabla)\Pb,A^{-1}(\ub - \ubt ) \right)\intd s \\
        &+ \int_0^t\! (\mathscr{R}_1[\ubt,\Pbt],A^{-1}(\ubt-\ub)) \intd s\\
        \eqqcolon& \mu_1 J_1 - \gamma_1 J_2 + \lambda_1 J_3 + \int_0^t\! (\mathscr{R}_1[\ubt,\Pbt],A^{-1}(\ubt-\ub)) \intd s.
      \end{aligned}
  \end{equation*}
Since $\Pb$ and $\Pbt$ are divergence-free, we can rewrite $J_1$, perform an integration-by-parts, apply \textsc{H\"older}'s inequality and use the continuous embeddings $\bH^2(\Omega)\hookrightarrow\bW^{1,6}(\Omega)\hookrightarrow\bL^\infty(\Omega)$ and $\bL^4(\Omega)\hookrightarrow\bL^{\frac{12}{5}}(\Omega)$ as well as the continuity of $A^{-1}:\bLs^2(\Omega)\rightarrow\DAO$ to obtain
   \begin{equation*}
      \begin{aligned}
        \vert J_3 \vert &=  \left\vert \int_0^t\! (\Pbt\otimes (\Pbt - \Pb) - (\Pbt-\Pb)\otimes (\Pbt - \Pb) + (\Pbt-\Pb)\otimes \Pbt, \nabla A^{-1}(\ubt-\ub))  \intd s \right\vert \\
        &\le c\int_0^t\! \Vert \Pbt\otimes (\Pbt - \Pb) - (\Pbt-\Pb)\otimes (\Pbt - \Pb) + (\Pbt-\Pb)\otimes \Pbt \Vert_{\bL^{\frac{6}{5}}} \Vert \ubt - \ub \Vert_{\bL^2} \intd s\\
        &\le c\int_0^t\! \left( 2 \Vert \Pbt \Vert_{\bL^{\frac{6}{5}}} \Vert \Pbt - \Pb\Vert_{\bL^\infty} +  \Vert \Pbt-\Pb \Vert_{\bL^{\frac{12}{5}}}^2  \right)\Vert \ubt - \ub \Vert_{\bL^2} \intd s \\
        &\le c \int_0^t\! ( 1+ \Vert \Pbt \Vert_{\bL^{\frac{6}{5}}})\left(  \Vert \Delta\Pbt - \Delta\Pb\Vert_{\bL^2}+ \Vert \Pbt-\Pb \Vert_{\bL^4}^2 \right) \Vert \ubt - \ub \Vert_{\bL^2} \intd s.
      \end{aligned}
  \end{equation*}
  In a similar way, we can estimate the term $J_1$ by
  \begin{equation*}
     \vert J_1 \vert \le c\int_0^t\!  \Vert \Delta\Pbt - \Delta\Pb \Vert_{\bL^2}   \Vert  \ub - \ubt  \Vert_{\bL^2} \intd s. 
 \end{equation*}
 and the term $J_2$ by
  \begin{equation*}
      \begin{aligned}
         \vert J_2 \vert &\le \int_0^t\!   \Vert \Delta\Pbt - \Delta\Pb \Vert_{\bL^2} \Vert  A^{-1}(\ub - \ubt ) \Vert_{\bL^2} 
         \intd s\\
        & \le c\int_0^t\!  \Vert \Delta\Pbt - \Delta\Pb \Vert_{\bL^2}   \Vert  \ub - \ubt  \Vert_{\bL^2} \intd s.
      \end{aligned}
  \end{equation*}
 Using the above estimates in \eqref{eq:coerc1}, applying \textsc{Young}'s inequality and absorbing the terms depending on $\ub$ and $\ubt$ into the left-hand side then proves the assertion. 
\end{proof}
We are now able to prove the relative energy inequality.
\begin{proof}[Proof of Theorem~\ref{thm:releng}]
We start by using the integration-by-parts formula from Lemma~\ref{lem:intby} to obtain
\begin{align}\label{eq:releng0}
	\mathscr{E}[\Pb|\Pbt](t) =& \frac{1}{2} \Vert \Pb(t)\Vert_{\bL^2}^2  -(\Pb_0,\Pbt(0))- \int_{0}^{t}\!\left( \langle\partial_t\Pb,\Pbt\rangle + \langle \Pb,\partial_t\Pbt\rangle \right)\intd s + \frac{1}{2}\Vert \Pbt(t)\Vert_{\bL^2}^2
\end{align}
for all $t\in[0,T]$. Note that by \eqref{eq:initialvalue}, the weak solution $\Pb$ takes the initial value in $(\DAO)^*$. Since $\Pbt\in L^4(\DA)\cap L^\infty(\bH^1)$ by Remark~\ref{rem:welldef}, it is an admissible test function in \eqref{eq:weak2}. Hence, by using \eqref{eq:weak2} in \eqref{eq:releng0} above as well as by adding and subtracting $(\mathscr{R}_2[\ubt,\Pbt],\Pb)$, we obtain
\begin{equation}\label{eq:relest1}
    \begin{aligned}
	    \mathscr{E}[\Pb|\Pbt](t) =& \frac{1}{2} \Vert \Pb(t)\Vert_{\bL^2}^2  -(\Pb_0,\Pbt(0))
	    +\mu_2 \int_0^t\! (\Delta\Pb,\Delta\Pbt)\intd s + \gamma_2 \int_0^t\! (\nabla\Pb,\nabla\Pbt) \intd s 
	    \\&+\lambda_2\int_0^t\!((\Pb\cdot\nabla)\Pb,\Pbt)\intd s 
	    +\alpha  \int_0^t\! (\vert \Pb \vert^2 \Pb , \Pbt) \intd s +\beta\int_0^t\! (\Pb,\Pbt)\intd s  +\int_0^t\! ((\ub\cdot\nabla)\Pb,\Pbt)\intd s 
	    \\&+ \frac{1}{2}\int_0^t\! ((\Pb\cdot\nabla)\Pbt-(\Pbt\cdot\nabla)\Pb,\ub)\intd s +\mu_2 \int_0^t\! (\Delta\Pbt,\Delta\Pb)\intd s + \gamma_2 \int_0^t\! (\nabla\Pbt,\nabla\Pb) \intd s
	    \\&+\lambda_2\int_0^t\!  ((\Pbt\cdot\nabla)\Pbt,\Pb)\intd s
	    +\alpha  \int_0^t\! (\vert \Pbt \vert^2 \Pbt , \Pb) \intd s +\beta\int_0^t\! (\Pbt,\Pb)\intd s + \int_0^t\! ((\ubt\cdot\nabla)\Pbt,\Pb)\intd s
	    \\&+ \frac{1}{2}\int_0^t\! ((\Pbt\cdot\nabla)\Pb-(\Pb\cdot\nabla)\Pbt,\ubt)\intd s   - \int_0^t\! (\mathscr{R}_2[\ubt,\Pbt],\Pb) \intd s+ \frac{1}{2}\Vert \Pbt(t)\Vert_{\bL^2}^2.
    \end{aligned}
\end{equation}
We observe that we can write the relative dissipation $\mathscr{W}[\Pb|\Pbt]$ as
\begin{align*}
    \int_0^t\!\mathscr{W}[\Pb|\Pbt]\intd s =& \int_0^t\! \left(\mu_2  \Vert \Delta\Pb \Vert_{\bL^2}^2 + \gamma_2  \Vert \nabla\Pb \Vert_{\bL^2}^2 + \alpha   \Vert \Pb \Vert_{\bL^4}^4 +\beta \Vert \Pb \Vert_{\bL^2}^2    \right)  \intd s \\ 
    &+ \int_0^t\! \left(\mu_2  \Vert \Delta\Pbt \Vert_{\bL^2}^2 +  \gamma_2  \Vert \nabla\Pbt \Vert_{\bL^2}^2 + \alpha   \Vert \Pbt \Vert_{\bL^4}^4  + \beta \Vert \Pbt \Vert_{\bL^2}^2    \right)  \intd s\\
    &-\alpha \int_0^t\!\left( 4(\vert \Pb \vert^2 \Pb, \Pbt) -6 (\vert \Pb \vert^2, \vert \Pbt \vert^2) +4 ( \Pb, \vert \Pbt \vert^2\Pbt)  \right) \intd s \\ 
    &-2\int_0^t\!\left( \mu_2 (\Delta \Pb,\Delta \Pbt) + \gamma_2 (\nabla \Pb, \nabla \Pbt) + \beta (\Pb,\Pbt)\right) \intd s \\
    &+\int_0^t\!\left(\gamma_2^-\Vert \nabla\Pb - \nabla\Pbt \Vert_{\bL^2}^2 +\beta^-\Vert\Pb - \Pbt \Vert_{\bL^2}^2 \right)\intd s.
\end{align*}
Employing the energy inequality \eqref{eq:energy} and the shifted energy equality \eqref{eq:shiftenerg}, we thus come up with
\begin{equation}\label{eq:relest2}
    \begin{aligned}
        \int_0^t\!\mathscr{W}[\Pb|\Pbt]\intd s \le& \frac{1}{2}\left( \Vert \Pb_0\Vert_{\bL^2}^2 -\Vert \Pb(t)\Vert_{\bL^2}^2   +  \Vert \Pbt(0)\Vert_{\bL^2}^2 - \Vert \Pbt(t)\Vert_{\bL^2}^2 \right) + \int_0^t\! (\mathscr{R}_2[\ubt,\Pbt],\Pbt) \intd s\\&
        -\alpha \int_0^t\!\left( 4(\vert \Pb \vert^2 \Pb, \Pbt) -6(\vert \Pb \vert^2, \vert \Pbt \vert^2) +4 ( \Pb, \vert \Pbt \vert^2\Pbt)  \right) \intd s \\&
        -2\int_0^t\!\left( \beta (\Pb,\Pbt) + \gamma_2 (\nabla \Pb, \nabla \Pbt) + \mu_2 (\Delta \Pb,\Delta \Pbt) \right) \intd s \\
        &+\int_0^t\!\left(\gamma_2^-\Vert \nabla\Pb - \nabla\Pbt \Vert_{\bL^2}^2 +\beta^-\Vert\Pb - \Pbt \Vert_{\bL^2}^2 \right)\intd s.
    \end{aligned}
\end{equation}
Here, we have employed the binomial formula
\begin{equation*}
    \vert \boldsymbol{a}-\boldsymbol{b}\vert^4 = \vert \boldsymbol{a} \vert^4 - 4 \vert \boldsymbol{a}\vert^2\boldsymbol{a}\cdot \boldsymbol{b} + 6 \vert \boldsymbol{a} \vert^2 \vert \boldsymbol{b} \vert^2 - 4 \boldsymbol{a} \cdot \boldsymbol{b}\vert \boldsymbol{b} \vert^2 + \vert \boldsymbol{b} \vert^4   
\end{equation*}
for vectors $\boldsymbol{a},\boldsymbol{b}\in\mathbb{R}^3$. Together with the identity
\begin{equation*}
    \begin{aligned}
        &-4(\vert \Pb \vert^2 \Pb, \Pbt) +6 (\vert \Pb \vert^2, \vert \Pbt \vert^2) -4 ( \Pb, \vert \Pbt \vert^2\Pbt)\\
        =& -(\vert \Pb \vert^2 \Pb, \Pbt) -  (\vert \Pbt \vert^2 \Pbt, \Pb) + 3  ((\vert \Pb\vert^2 - \vert \Pbt \vert^2)(\Pbt-\Pb), \Pbt) + 3(\vert \Pbt \vert^2 (\Pbt-\Pb),\Pbt-\Pb),
    \end{aligned}
\end{equation*}
adding \eqref{eq:relest1} and \eqref{eq:relest2} results in
\begin{equation}\label{eq:releng1}
\begin{aligned}
	&\mathscr{E}[\Pb|\Pbt](t)+\int_{0}^{t}\!\mathscr{W}[\Pb|\Pbt]\,\text{d}s
	\\ \le& \mathscr{E}[\Pb_0|\Pbt(0)]+ \int_0^t\! (\mathscr{R}_2[\ubt,\Pbt],\Pbt-\Pb) \intd s+\int_{0}^{t}\!((\ub\cdot\nabla)\Pb,\Pbt)+((\ubt\cdot\nabla)\Pbt,\Pb)\intd s \\
	&+\frac{1}{2}\int_{0}^{t}\!\left(((\Pb\cdot\nabla)\Pbt-(\Pbt\cdot\nabla)\Pb,\ub)+((\Pbt\cdot\nabla)\Pb-(\Pb\cdot\nabla)\Pbt,\ubt)\right)\intd s \\
	&+\lambda_2\int_{0}^{t}\!\left(((\Pb\cdot\nabla)\Pb,\Pbt)+((\Pbt\cdot\nabla)\Pbt,\Pb)\right)\intd s\\ &+\int_0^t\!\left(\gamma_2^-\Vert \nabla\Pb - \nabla\Pbt \Vert_{\bL^2}^2 +\beta^-\Vert\Pb - \Pbt \Vert_{\bL^2}^2 \right)\intd s\\
	&  +3\alpha \int_{0}^{t}\!\left( ((\vert \Pb\vert^2 - \vert \Pbt \vert^2)(\Pb-\Pbt), \Pbt) + (\vert \Pbt \vert^2 (\Pbt-\Pb),\Pbt-\Pb)\right)\intd s\\
	\eqqcolon& \mathscr{E}[\Pb_0|\Pbt(0)] + \int_0^t\! (\mathscr{R}_2[\ubt,\Pbt],\Pbt-\Pb) \intd s + I_1 + \frac{1}{2} I_2 + \lambda_2 I_3 + I_4 + 3\alpha I_5.
\end{aligned}
\end{equation}
It remains to estimate the integral expressions $I_1,\dots,I_5$ against the integral over terms of the relative energy $\mathscr{E}[\Pb,\Pbt]$ and the relative dissipation $\mathscr{W}[\Pb,\Pbt]$.

Since $\ub$ and $\ubt$ are divergence-free, we observe that $(((\ub-\ubt) \cdot \nabla)\Pbt,\Pbt)=0$ and we obtain
\begin{equation*}
    \begin{aligned}
        \vert I_1 \vert =& \left\vert \int_{0}^{t}\!\left(((\ub\cdot\nabla)\Pb,\Pbt)+((\ubt\cdot\nabla)\Pbt,\Pb) + (((\ub-\ubt) \cdot \nabla)\Pbt,\Pbt) \right)\intd s \right\vert\\
        =&\left\vert \int_{0}^{t}\!(((\ub-\ubt) \cdot \nabla)\Pbt,\Pbt - \Pb) \intd s \right\vert.
    \end{aligned}
\end{equation*}
Applying \textsc{H\"older}'s and \textsc{Young}'s inequality as well as Lemma~\ref{lem:coerc} then yields
\begin{equation}
    \begin{aligned}\label{eq:int1}
        \vert I_1 \vert \le& \frac{\delta}{2\varepsilon C(1+\Vert \Pbt \Vert_{L^\infty(\bL^{\frac{6}{5}})}^2)} \int_0^t\!\Vert \ub - \ubt \Vert_{\bL^2}^2 \intd s\\ &+ \int_0^t\!  c_\delta \varepsilon C(1+\Vert \Pbt \Vert_{L^\infty(\bL^\frac{6}{5})}^2) \Vert \nabla \Pbt \Vert_{\bL^\infty}^2\Vert \Pb - \Pbt \Vert_{\bL^2}^2 \intd s \\
        \le&  \delta \int_0^t\! \mathscr{W}[\Pb,\Pbt] \intd s + \int_0^t\!\mathscr{K}[\Pbt]\mathscr{E}[\Pb,\Pbt] \intd s + \frac{1}{2} \int_0^t\! (\mathscr{R}_1[\ubt,\Pbt], A^{-1}(\ubt-\ub)) \intd s.
    \end{aligned}
\end{equation}
Here $0<\delta\le 1$ is fixed in such a way that the factor in front of the terms on the right-hand side of \eqref{eq:relengery} that depend on $\mathscr{W}$ will add up to $1/2$ so that they can be absorbed into the left-hand side. Hence, the constant $c_\delta$ that stems from \textsc{Young}'s inequality can be incorporated into the definition of $\mathscr{K}$. 

An integration-by-parts yields
\begin{equation*}
    \begin{aligned}
        \Vert \nabla\Pb - \nabla\Pbt \Vert_{\bL^2} \le \Vert \Pb - \Pbt \Vert_{\bL^2}^{\frac{1}{2}}\Vert \Delta\Pb - \Delta\Pbt \Vert_{\bL^2}^{\frac{1}{2}}
    \end{aligned}
\end{equation*}
and hence, by \textsc{H\"older}'s inequality, we get
\begin{equation*}
    \begin{aligned}
    \frac{1}{2} \vert I_2 \vert &= \frac{1}{2} \left\vert \int_0^t\!\left( (((\Pb - \Pbt)\cdot\nabla)\Pbt,\ub-\ubt) + ((\Pbt\cdot\nabla)(\Pbt - \Pb),\ub-\ubt) \right)\intd s \right\vert \\
    &\le \frac{1}{2}\int_0^t\!  \Vert \Pb - \Pbt \Vert_{\bL^2}\Vert \nabla\Pbt \Vert_{\bL^\infty} \Vert \ub - \ubt \Vert_{\bL^2} \intd s \\&\qquad\qquad+ \frac{1}{2}\int_0^t\!\Vert \Pbt \Vert_{\bL^\infty }\Vert \Pb - \Pbt \Vert_{\bL^2}^{\frac{1}{2}}\Vert \Delta\Pb - \Delta\Pbt \Vert_{\bL^2}^{\frac{1}{2}} \Vert \ub - \ubt \Vert_{\bL^2}   \intd s.
    \end{aligned}
\end{equation*}
As in \eqref{eq:int1} we can then use \textsc{Young}'s inequality and Lemma~\ref{lem:coerc} to find
\begin{equation}\label{eq:int2}
    \begin{aligned}
         \frac{1}{2} \vert I_2 \vert \le \delta \int_0^t\! \mathscr{W}[\Pb|\Pbt] \intd s + \int_0^t\!\mathscr{K}[\Pbt]\mathscr{E}[\Pb|\Pbt] \intd s + \frac{1}{2}\int_0^t\! (\mathscr{R}_1[\ubt,\Pbt], A^{-1}(\ubt-\ub)) \intd s.
    \end{aligned}
\end{equation}
Since also $\Pb-\Pbt$ is divergence-free, we find, similarly as for $I_1$, that
\begin{equation}\label{eq:int3}
    \begin{aligned}
        \lambda_2\vert  I_3 \vert =&\lambda_2 \left\vert  \int_{0}^{t}\!(((\Pb-\Pbt) \cdot \nabla)\Pbt,\Pbt - \Pb) \intd s \right\vert\\
        \le& \lambda_2 \int_0^t\! \Vert \Pb - \Pbt \Vert_{\bL^2}  \Vert \nabla\Pbt \Vert_{\bL^\infty} \Vert \Pb - \Pbt \Vert_{\bL^2} \intd s\\
        \le&   \int_0^t\!\mathscr{K}[\Pbt]\mathscr{E}[\Pb|\Pbt] \intd s + \delta \int_0^t\! \mathscr{W}[\Pb|\Pbt] \intd s.
    \end{aligned}
\end{equation}
Performing an integration-by-parts and using \textsc{H\"older}'s as well as \textsc{Young}'s inequality gives
\begin{equation}\label{eq:int4}
    \begin{aligned}
        \vert  I_4 \vert \le&  \int_{0}^{t}\!\left( \gamma_2^-\Vert\Pb-\Pbt\Vert_{\bL^2} \Vert\Delta \Pb - \Delta \Pbt\Vert_{\bL^2}+ \beta^-\Vert\Pb-\Pbt\Vert_{\bL^2}^2\right)\intd s\\
        \le&  \int_{0}^{t}\!c_\delta \Vert\Pb-\Pbt\Vert_{\bL^2}^2 \intd s + \delta \int_0^t \mu_2 \Vert\Delta \Pb - \Delta \Pbt\Vert_{\bL^2}^2 \intd s\\
        \le&  \int_0^t\!\mathscr{K}[\Pbt]\mathscr{E}[\Pb|\Pbt] \intd s + \delta \int_0^t\! \mathscr{W}[\Pb|\Pbt] \intd s.
    \end{aligned}
\end{equation}
The estimate
\begin{equation*}
    \begin{aligned}
    \vert \Pb \vert^2 - \vert \Pbt \vert^2 =& 2(\vert \Pb \vert - \vert \Pbt \vert)\vert \Pbt \vert + (\vert \Pbt \vert - \vert \Pb \vert)^2 \\
    &\le 2\vert \Pb  -  \Pbt \vert\vert \Pbt \vert + \vert \Pbt  -  \Pb \vert^2
    \end{aligned}
\end{equation*}
together with \textsc{H\"older}'s inequality then gives
\begin{equation*}
    \begin{aligned}
         3\alpha \vert  I_5 \vert \le&   3\alpha \int_{0}^{t}\! \left( 3 \Vert \Pb - \Pbt \Vert_{\bL^2}^2 \Vert  \Pbt \Vert_{\bL^\infty}^2  + \Vert \Pb - \Pbt \Vert_{\bL^4}^3 \Vert \Pbt \Vert_{\bL^4} \right) \intd s.
    \end{aligned}
\end{equation*}
By a well-known interpolation inequality for $\bL^p(\Omega)$ spaces and the \textsc{Sobolev} embedding $ \bH^2(\Omega)\hookrightarrow\bL^\infty(\Omega)$, we can further estimate 
\begin{equation*}
    \begin{aligned}
         \Vert \Pb - \Pbt \Vert_{\bL^4}^3 \Vert \Pbt \Vert_{\bL^4} \le c\Vert \Pb - \Pbt \Vert_{\bL^4}^2 \Vert \Pb - \Pbt \Vert_{\bL^2}^{\frac{1}{2}} \Vert \Delta \Pb - \Delta \Pbt \Vert_{\bL^2}^{\frac{1}{2}} \Vert \Pbt \Vert_{\bL^4}.
    \end{aligned}
\end{equation*}
With \textsc{Young}'s inequality, we see that
\begin{equation}\label{eq:int5}
    \begin{aligned}
         3\alpha \vert  I_5 \vert \le&   \int_0^t\!\mathscr{K}[\Pbt]\mathscr{E}[\Pb|\Pbt] \intd s + \delta \int_0^t\! \mathscr{W}[\Pb|\Pbt] \intd s. 
    \end{aligned}
\end{equation}
Applying \eqref{eq:int1}--\eqref{eq:int5} to \eqref{eq:releng1} with an appropriate choice of $\delta$ finally yields
\begin{align*}
    \mathscr{E}[\Pb|\Pbt](t)+\int_{0}^{t}\!\mathscr{W}[\Pb|\Pbt]\intd s \le \mathscr{E}[\Pb_0|\Pbt(0)] +& \frac{1}{2}\int_{0}^{t}\!\mathscr{W}[\Pb|\Pbt]\intd s+ \int_{0}^{t}\!\mathscr{K}[\Pbt]\mathscr{E}[\Pb|\Pbt]\intd s\\&+ \int_{0}^{t}\!\left(\mathscr{R}[\ubt,\Pbt],\left(\begin{array}{c} A^{-1}(\ubt-\ub)\\ \Pbt-\Pb \end{array}\right)\right)\intd s,
\end{align*}
which is the assertion.

\end{proof}
With the relative energy inequality available, an immediate consequence is then the weak-strong uniqueness of the weak solutions constructed in Section \ref{sec:existence}, which by Lemma \ref{lem:energyineq} fulfill the necessary energy inequality. First, we make our understanding of strong solutions precise.

\begin{definition}[Strong solution]\label{def:strong}
Let $\Pbt_0\in\DAO$. A pair $(\ubt,\Pbt)\in L^2(\DA)\times W^{1,2}(\bH^4\cap\bH^1_{0,\sigma})$ is called a strong solution to \eqref{eq:model} if the equations \eqref{eq:weak1} and \eqref{eq:weak2} hold and $\Pbt(0)=\Pbt_0$ almost everywhere in $\Omega$.
\end{definition}
It is then clear that strong solutions lie in the class of possible test functions for the relative energy inequality from which the weak-strong uniqueness follows.

\begin{corollary}[Weak-strong uniqueness]
Let $(\ub,\Pb)$ be a weak solution to \eqref{eq:model} in the sense of Definition~\ref{def:weak} that additionally fulfills the energy inequality \eqref{eq:energy} and let $(\ubt,\Pbt)$ be a strong solution to \eqref{eq:model} in the sense of Definition~\ref{def:strong}, starting from the same initial datum $\Pbt_0\in\bH^4(\Omega)\cap\bH^1_{0,\sigma}(\Omega)$. Then $(\ub,\Pb)$ is unique and coincides with $(\ubt,\Pbt)$. 
\end{corollary}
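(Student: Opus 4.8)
The plan is to feed the strong solution $(\ubt,\Pbt)$ directly into the relative energy inequality \eqref{eq:relengery} as the pair of admissible test functions and then to close the resulting estimate by a Gronwall argument. First I would verify that a strong solution in the sense of Definition~\ref{def:strong} is indeed admissible in Theorem~\ref{thm:releng}: from $\Pbt\in W^{1,2}(\bH^4\cap\bH^1_{0,\sigma})$ one has $\Pbt\in W^{1,2}(\bLs^2)\cap L^2(\bH^4\cap\bH^1_{0,\sigma})$ by $\bH^4(\Omega)\hookrightarrow\bLs^2(\Omega)$, while the one-dimensional-in-time embedding $W^{1,2}(0,T;\bH^4)\hookrightarrow\mathscr{C}([0,T];\bH^4)$ together with $\bH^4(\Omega)\hookrightarrow\bLs^6(\Omega)$ gives $\Pbt\in L^\infty(\bH^4)\hookrightarrow L^6(\bLs^6)$. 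Combined with $\ubt\in L^2(\DA)$, this matches exactly the regularity demanded in the statement of Theorem~\ref{thm:releng}.

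The crucial observation is that, for a strong solution, the residual term in \eqref{eq:relengery} vanishes. By the additional regularity of $(\ubt,\Pbt)$, the weak identities \eqref{eq:weak1} and \eqref{eq:weak2} can be integrated by parts to show that $(\ubt,\Pbt)$ solves \eqref{classical} pointwise almost everywhere with some reconstructed pressures $\pi_1,\pi_2$; hence $\mathscr{R}_1[\ubt,\Pbt]=-\nabla\pi_1$ and $\mathscr{R}_2[\ubt,\Pbt]=-\nabla\pi_2$ are gradients. Since both test functions $A^{-1}(\ubt-\ub)\in\DAO$ and $\Pbt-\Pb$ are solenoidal with vanishing trace, these pressure gradients are annihilated in the pairing, so that
\[
\int_0^t\!\left(\mathscr{R}[\ubt,\Pbt],\left(\begin{array}{c} A^{-1}(\ubt-\ub)\\ \Pbt-\Pb \end{array}\right)\right)\intd s = 0.
\]
Because the weak and the strong solution start from the same datum $\Pbt_0$, the relative energy at the initial time vanishes, $\mathscr{E}[\Pb_0|\Pbt(0)]=0$, and \eqref{eq:relengery} collapses to
\[
\mathscr{E}[\Pb|\Pbt](t)+\tfrac12\int_0^t\!\mathscr{W}[\Pb|\Pbt]\intd s \le \int_0^t\!\mathscr{K}[\Pbt]\,\mathscr{E}[\Pb|\Pbt]\intd s .
\]

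To apply Gronwall's lemma I would then check that $\mathscr{K}[\Pbt]\in L^1(0,T)$, which is immediate from the regularity of the strong solution: $\Pbt\in\mathscr{C}([0,T];\bH^4)$ together with $\bH^4(\Omega)\hookrightarrow\bW^{1,\infty}(\Omega)$ gives $\nabla\Pbt\in L^\infty(\bL^\infty)$, while $\Pbt\in L^\infty(\bL^4)\cap L^\infty(\bL^{6/5})$ bounds the remaining terms in the definition \eqref{eq:defk}, so in fact $\mathscr{K}[\Pbt]\in L^\infty(0,T)$. Discarding the nonnegative dissipation term and applying Gronwall's inequality to $t\mapsto\mathscr{E}[\Pb|\Pbt](t)$ then forces $\mathscr{E}[\Pb|\Pbt](t)=0$ for almost all $t\in(0,T)$, whence $\Pb=\Pbt$ almost everywhere in $\Omega\times(0,T)$.

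It remains to recover the velocity fields, which is where I would invoke Lemma~\ref{lem:coerc}. Once $\Pb=\Pbt$ is known, the relative dissipation $\mathscr{W}[\Pb,\Pbt]$ vanishes identically, and the defect $\mathscr{R}_1[\ubt,\Pbt]$ again drops out of the pairing with the solenoidal function $A^{-1}(\ubt-\ub)$; consequently the right-hand side of \eqref{eq:coerc1} is zero and we obtain $\int_0^t\|\ub-\ubt\|_{\bL^2}^2\intd s=0$, i.e.\ $\ub=\ubt$ almost everywhere, which gives the claimed uniqueness and coincidence. The main obstacle in making this rigorous is not the Gronwall step, which is routine once $\mathscr{K}[\Pbt]\in L^1(0,T)$ is secured, but the bookkeeping needed to justify that a strong solution annihilates the residual terms against solenoidal test functions; this rests on reconstructing the pressure from the higher regularity of $(\ubt,\Pbt)$ and is the only place where Definition~\ref{def:strong} enters in an essential way.
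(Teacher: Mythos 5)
Your proposal is correct and follows essentially the same route as the paper: test the relative energy inequality \eqref{eq:relengery} with the strong solution $(\ubt,\Pbt)$, observe that the residual term and the initial relative energy vanish, and conclude with \textsc{Gronwall}'s lemma. The paper's one-line proof leaves implicit the two points you spell out---that $\mathscr{R}[\ubt,\Pbt]$ vanishes only when paired with solenoidal test functions (being a reconstructed pressure gradient) and that $\ub=\ubt$ is then recovered, e.g.\ via Lemma~\ref{lem:coerc} or directly from \eqref{eq:weak1}---but these are refinements of the same argument rather than a different approach.
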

\begin{proof}
Since for any strong solution $(\ubt,\Pbt)$ the expression $\mathscr{R}[\ubt,\Pbt]$ vanishes, dropping the (positive) term $\frac{1}{2}\int_0^t\!\mathscr{W}[\Pb|\Pbt]\intd t$ on the left-hand side of \eqref{eq:relengery}, and an application of \textsc{Gronwall}'s lemma yields the result.  
\end{proof}

\section{Relation to a phenomenological model}\label{sec:connect}

The numerical experiments carried out in \textsc{Reinken} et al.~\cite{reinken} suggest that, for a very small parameter $\varepsilon$, the dynamics of the active fluid is dominated by the movement of the microswimmers, and the influence of the velocity field of the suspension fluid can be neglected. Hence, the behaviour of the fluid as a whole is described by only the vector field $\Pb$. We see this reflected in the governing equations via the following formal calculations. Setting $\varepsilon=0$ and thus $\mu_1=\gamma_1=\lambda_1=0$ in \eqref{classical} leaves us with the decoupled system 
\begin{align*}
	-\Delta\ub+\nabla \pi_1&=0,\\
	\partial_t\Pb + \mu_2\Delta^2\Pb - \gamma_2 \Delta\Pb + \lambda_2 (\Pb\cdot\nabla)\Pb +\alpha \vert \Pb \vert^2\Pb + \beta \Pb\quad& \\+ (\ub\cdot\nabla) \Pb +\kappa \usym\Pb - \uskw\Pb + \nabla\pi_2 &= 0, \\
	\nabla\cdot\ub  = \nabla\cdot\Pb &= 0
\end{align*}
together with \eqref{eq:initcond}. Inserting the trivial solution $\ub=0$ and $\pi_1$ constant reduces this to
\begin{equation}\label{eq:simple}
	\begin{aligned}
	    \partial_t\Pb + \mu_2\Delta^2\Pb - \gamma_2 \Delta\Pb + \lambda_2 (\Pb\cdot\nabla)\Pb +\alpha \vert \Pb \vert^2\Pb + \beta \Pb + \nabla\pi_2 &= 0
	\end{aligned}
\end{equation}
with \eqref{eq:initcond}. The equations above coincide with the ones in a phenomenological model that was proposed in \textsc{Wensink} et al.~\cite{wensink}. The existence of strong solutions to a generalisation of these equations on the whole space was shown in \textsc{Zanger} et al.~\cite{zanger} and can be adapted to the case of a bounded domain. The goal of this section is to make this relation rigorous by showing that weak solutions to \eqref{eq:model} constructed in Section \ref{sec:existence} converge to strong solutions to \eqref{eq:simple} with \eqref{eq:initcond} as $\varepsilon\rightarrow 0$ by employing the relative energy inequality \eqref{eq:relengery} from Theorem~\ref{thm:releng}.  

\begin{theorem}
Let $(\Pbez)_{\varepsilon>0}\subset\DAO$ be a family of initial values and for $\varepsilon>0$ let $(\ube,\Pbe)$ be a weak solution to \eqref{eq:model}, in the sense of Definition~\ref{def:weak}, starting from the initial datum $\Pbez\in\DAO$ that fulfills the energy inequality \eqref{eq:energy}. Furthermore, let $\Pbt\in W^{1,2}(\bH^4\cap\bH^1_{0,\sigma})$ be a strong solution to \eqref{eq:simple} together with \eqref{eq:initcond} starting from the initial datum $\Pbt_0\in\bH^4(\Omega)\cap\bH^1_{0,\sigma}(\Omega)$. If $\Pbez\rightarrow\Pbt_0$ in $\bLs^2(\Omega)$ then for the corresponding family of weak solutions $(\ube,\Pbe)_{\varepsilon>0}$ there holds
\begin{align*}
(\ube,\Pbe)\rightarrow (0,\Pbt) \text{ in } L^2(\bLs^2) \times L^\infty(\bL^2)\cap L^4(\bL^4) \cap L^2(\DA)
\end{align*}
as $\varepsilon\rightarrow 0$.
\end{theorem}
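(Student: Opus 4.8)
The plan is to feed the relative energy inequality of Theorem~\ref{thm:releng} with the test pair $(\ubt,\Pbt)=(0,\Pbt)$, where $\Pbt$ is the given strong solution to \eqref{eq:simple}, and to compare it against each member $(\ube,\Pbe)$ of the family of weak solutions. First I would check admissibility of this pair: since $\Pbt\in W^{1,2}(\bH^4\cap\bH^1_{0,\sigma})\hookrightarrow\mathscr{C}([0,T];\bH^4\cap\bH^1_{0,\sigma})$ is solenoidal, it lies in $W^{1,2}(\bLs^2)\cap L^6(\bLs^6)\cap L^2(\bH^4\cap\bH^1_{0,\sigma})$ (using $\bH^4\hookrightarrow\bL^6$ for the middle space), while $\ubt=0\in L^2(\DA)$ trivially. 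With these choices the relative energy becomes $\mathscr{E}[\Pbe|\Pbt](t)=\tfrac12\Vert\Pbe-\Pbt\Vert_{\bL^2}^2$ and its initial value $\mathscr{E}[\Pbez|\Pbt(0)]=\tfrac12\Vert\Pbez-\Pbt_0\Vert_{\bL^2}^2$ tends to zero as $\varepsilon\to0$ by the hypothesis $\Pbez\to\Pbt_0$ in $\bLs^2$.

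The crux is the behaviour of the two components of $\mathscr{R}[0,\Pbt]$. Inserting $\ubt=0$ gives $\mathscr{R}_2[0,\Pbt]=\partial_t\Pbt+\mu_2\Delta^2\Pbt-\gamma_2\Delta\Pbt+\lambda_2(\Pbt\cdot\nabla)\Pbt+\alpha\vert\Pbt\vert^2\Pbt+\beta\Pbt$, which by \eqref{eq:simple} equals the pressure gradient $-\nabla\pi_2$; since $\Pbt-\Pbe$ is divergence-free with the appropriate boundary behaviour, the $\bL^2$-pairing $\int_0^t(\mathscr{R}_2[0,\Pbt],\Pbt-\Pbe)\intd s$ vanishes identically. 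For the first component, inserting $\ubt=0$ and invoking the coupling \eqref{eq:coupling} yields $\mathscr{R}_1[0,\Pbt]=\varepsilon\big(\tilde{\mu}_1\Delta^2\Pbt-\tilde{\gamma}_1\Delta\Pbt+\tilde{\lambda}_1(\Pbt\cdot\nabla)\Pbt\big)$, so that $\Vert\mathscr{R}_1[0,\Pbt]\Vert_{L^2(\bL^2)}\le\varepsilon\,c_{\Pbt}$ by the regularity of $\Pbt$ (note $\Delta^2\Pbt\in L^2(\bL^2)$ and $(\Pbt\cdot\nabla)\Pbt\in L^\infty(\bL^\infty)$). Pairing against $A^{-1}(\ubt-\ube)=-A^{-1}\ube$ and using \textsc{Cauchy--Schwarz}, the boundedness of $A^{-1}$ on $\bLs^2$, and the velocity estimate \eqref{eq:energy2}, which gives $\Vert\ube\Vert_{L^2(\bL^2)}\le\varepsilon\,c(\Vert\Pbez\Vert_{\bL^2}^2+1)^{1/2}$, I obtain $\big\vert\int_0^t(\mathscr{R}_1[0,\Pbt],A^{-1}(\ubt-\ube))\intd s\big\vert\le C\varepsilon^2$.

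Consequently the relative energy inequality collapses to $\mathscr{E}[\Pbe|\Pbt](t)+\tfrac12\int_0^t\mathscr{W}[\Pbe|\Pbt]\intd s\le\tfrac12\Vert\Pbez-\Pbt_0\Vert_{\bL^2}^2+C\varepsilon^2+\int_0^t\mathscr{K}[\Pbt]\mathscr{E}[\Pbe|\Pbt]\intd s$. The decisive point is that $\mathscr{K}[\Pbt]$ carries an explicit factor $\varepsilon$ through its definition \eqref{eq:defk}, and since $\Pbt\in L^\infty(\bH^4)\cap L^2(\bH^4)$ controls $\Vert\nabla\Pbt\Vert_{\bL^\infty}$, $\Vert\Pbt\Vert_{\bL^{6/5}}$ and $\Vert\Pbt\Vert_{\bL^4}$, the Gronwall coefficient $\int_0^T\mathscr{K}[\Pbt]\intd s=\varepsilon\,\tilde c_{\Pbt}$ stays bounded for $\varepsilon\le1$. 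Dropping the nonnegative dissipation integral and applying \textsc{Gronwall}'s lemma then gives $\sup_t\mathscr{E}[\Pbe|\Pbt](t)\le e^{\tilde c_{\Pbt}}\big(\tfrac12\Vert\Pbez-\Pbt_0\Vert_{\bL^2}^2+C\varepsilon^2\big)\to0$, i.e.\ $\Pbe\to\Pbt$ in $L^\infty(\bL^2)$.

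Feeding this back into the same inequality, the middle term obeys $\int_0^T\mathscr{K}[\Pbt]\mathscr{E}[\Pbe|\Pbt]\intd s\le\big(\sup_t\mathscr{E}[\Pbe|\Pbt]\big)\int_0^T\mathscr{K}[\Pbt]\intd s\to0$, whence $\int_0^T\mathscr{W}[\Pbe|\Pbt]\intd s\to0$; since $\mathscr{W}$ dominates $\mu_2\Vert\Delta\Pbe-\Delta\Pbt\Vert_{\bL^2}^2$ and $\alpha\Vert\Pbe-\Pbt\Vert_{\bL^4}^4$, this yields convergence in $L^2(\DA)\cap L^4(\bL^4)$ (using that $\Vert\Delta\cdot\Vert_{\bL^2}$ is the norm on $\DA$). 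Finally, $\ube\to0$ in $L^2(\bLs^2)$ is immediate from \eqref{eq:energy2} together with the boundedness of $\Vert\Pbez\Vert_{\bL^2}$. Combining these gives the claimed convergence $(\ube,\Pbe)\to(0,\Pbt)$. I expect the only genuine subtleties to be the vanishing of the $\mathscr{R}_2$ contribution via the solenoidality/pressure argument, and the careful bookkeeping of the three independent sources of smallness -- $\mathscr{R}_1=O(\varepsilon)$ from the coupling, $\ube=O(\varepsilon)$ from the energy estimate, and $\mathscr{K}=O(\varepsilon)$ from its definition -- which together guarantee that every error term vanishes with $\varepsilon$ while the \textsc{Gronwall} factor stays bounded.
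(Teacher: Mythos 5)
Your proposal is correct and follows essentially the same route as the paper: test the relative energy inequality with $(\boldsymbol{\tilde{u}},\boldsymbol{\tilde{p}})=(0,\boldsymbol{\tilde{p}})$, observe that the $\mathscr{R}_2$ contribution drops out since $\boldsymbol{\tilde{p}}$ solves \eqref{eq:simple}, bound $\mathscr{R}_1[0,\boldsymbol{\tilde{p}}]=O(\varepsilon)$ via the coupling \eqref{eq:coupling} and $\Vert A^{-1}\boldsymbol{u}_\varepsilon\Vert_{L^2(\boldsymbol{L}^2)}=O(\varepsilon)$ via \eqref{eq:energy2}, note that $\int_0^T\mathscr{K}[\boldsymbol{\tilde{p}}]\,\mathrm{d}s=O(\varepsilon)$ keeps the \textsc{Gronwall} factor bounded, and recover the $L^2(\boldsymbol{L}^2_\sigma)$ convergence of $\boldsymbol{u}_\varepsilon$ directly from \eqref{eq:energy2}. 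Your explicit justification that the $\mathscr{R}_2$ pairing vanishes (as $-\nabla\pi_2$ tested against the solenoidal field $\boldsymbol{\tilde{p}}-\boldsymbol{p}_\varepsilon$) is a slightly more careful rendering of a point the paper states without comment, but it is not a different argument.
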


\begin{proof}
First, we denote by $\mathscr{K}_\varepsilon[\Pbt]=\mathscr{K}[\Pbt]$ the expression defined in \eqref{eq:defk}, depending on $\varepsilon$ via $(\ube,\Pbe)$. The pair $(0,\Pbt)$ then is admissible to test with in the relative energy inequality \eqref{eq:relengery}. An application of \textsc{Gronwall}'s lemma yields
\begin{equation}\label{eq:limit1}
    \begin{aligned}
        &\mathscr{E}[\Pbe|\Pbt](t) + \frac{1}{2}\int_0^t\!\mathscr{W}[\Pbe|\Pbt] \exp\left(\int_s^t\!\mathscr{K}_\varepsilon[\Pbt]\mathrm{d}\tau\right) \intd s \\
        \le& \mathscr{E}[\Pbez|\Pbt_0]\exp\left( \int_0^t\!\mathscr{K}_\varepsilon[\Pbt]\mathrm{d}s\right) + \int_0^t\!\left(\mathscr{R}[0,\Pbt],\left(\begin{array}{c}  A^{-1}(-\ube)\\ \Pbt-\Pbe  \end{array}\right)\right) \exp\left( \int_s^t\!\mathscr{K}_\varepsilon[\Pbt]\mathrm{d}\tau\right)\mathrm{d}s
    \end{aligned}
\end{equation}
for almost all $t\in(0,T)$. Since $\mathscr{K}_\varepsilon[\Pbt]\ge 0$, it suffices to show that the right-hand side of this inequality vanishes to obtain the desired convergences. First, we note that $\int_s^t\!\mathscr{K}_\varepsilon[\Pbt]\mathrm{d}\tau$ is uniformly  bounded in $\varepsilon$ (in fact vanishes as $\varepsilon\rightarrow 0$) since $\varepsilon$ only appears as the leading constant in $\mathscr{K}_\varepsilon[\Pbt]$. Hence, with the convergence $\Pbez\rightarrow\Pbt_0$ in $\bL^2(\Omega)$, the first term on the right-hand side of \eqref{eq:limit1} vanishes as $\varepsilon\rightarrow 0$. Furthermore, since $\Pbt$ is a strong solution to \eqref{eq:simple} and therefore $\mathscr{R}_2[0,\Pbt]$ vanishes, it only remains to show that
\begin{align*}
    \int_0^t\!(\mathscr{R}_1[0,\Pbt], A^{-1}\ube)\intd s =& \varepsilon\int_0^t\! (\tilde{\mu_2}\Delta^2\Pbt-\tilde{\gamma_1}\Delta\Pbt+\tilde{\lambda_1} (\Pbt\cdot\nabla)\Pbt,A^{-1}\ube)\intd s
\end{align*}
tends to zero as $\varepsilon\rightarrow 0$. Using the continuity of $A^{-1} \colon\bLs^2(\Omega)\rightarrow\DAO\hookrightarrow\bLs^2(\Omega)$ and estimate \eqref{eq:energy2}, we find
\begin{align*}
    \Vert A^{-1}\ube \Vert_{L^2(\bL^2)} \le \varepsilon c(\Vert \Pbez \Vert_{\bL^2} + 1).
\end{align*}
Together with the \textsc{Cauchy--Schwarz} inequality and \eqref{eq:coupling}, we finally obtain
\begin{align*}
    \int_0^t\!(\mathscr{R}_1[0,\Pbt], A^{-1}\ube)\intd s \le& \varepsilon  \Vert \tilde{\mu_2}\Delta^2\Pbt-\tilde{\gamma_1}\Delta\Pbt+\tilde{\lambda_1} (\Pbt\cdot\nabla)\Pbt \Vert_{L^2(\bL^2)}\Vert A^{-1}\ube \Vert_{L^2(\bL^2)}\\
    \le& \varepsilon^2 c  \Vert \tilde{\mu_2}\Delta^2\Pbt-\tilde{\gamma_1}\Delta\Pbt+\tilde{\lambda_1} (\Pbt\cdot\nabla)\Pbt \Vert_{L^2(\bL^2)}(\Vert \Pbez \Vert_{\bL^2} + 1),
\end{align*}
which in turn yields the statement.

\end{proof}

\section*{Acknowledgement}

This work has been supported by Deutsche Forschungsgemeinschaft through Collaborative Research Center 910 “Control of self-organizing nonlinear systems:~Theoretical methods and concepts of application”.

\bibliographystyle{mybibstyle}
\bibliography{references}

\end{document}